\theoremstyle{plain}
\newtheorem{theorem}{Theorem}[section]
\newtheorem{corollary}[theorem]{Corollary}
\newtheorem{prop}[theorem]{Proposition}
\newtheorem{proposition}[theorem]{Proposition}
\newtheorem{lemma}[theorem]{Lemma}
\theoremstyle{definition}
\newtheorem{example}[theorem]{Example}
\newtheorem{definition}[theorem]{Definition}
\newtheorem{notation}[theorem]{Notation}
 \DeclareMathOperator{\re}{Re\,}
 \DeclareMathOperator{\dist}{dist\,}
\newcommand{\K}{\mathbb{K}}
\newcommand{\C}{\mathbb{C}}
\newcommand{\R}{\mathbb{R}}
\newcommand{\N}{\mathbb{N}}
\newcommand{\eps}{\varepsilon}
\newcommand{\lra}{\longrightarrow}
\renewcommand{\leq}{\leqslant}
\renewcommand{\geq}{\geqslant}
\begin{document}

\title[Bishop-Phelps-Bollob\'{a}s version of Lindenstrauss properties A and B]{The Bishop-Phelps-Bollob\'{a}s version of \\  Lindenstrauss properties A and B}

\author[Aron]{Richard Aron}
\address[Aron]{Department of Mathematical Sciences, Kent State University, Kent, OH 44242, USA}
\email{\texttt{aron@math.kent.edu}}

\author[Choi]{Yun Sung Choi}
\address[Choi]{Department of Mathematics, POSTECH, Pohang (790-784), Republic of Korea}
\email{\texttt{mathchoi@postech.ac.kr}}

\author[Kim]{Sun Kwang Kim}
\address{Department of Mathematics, Kyonggi University, Suwon 443-760, Republic of Korea}
\email{sunkwang@kgu.ac.kr}

\author[Lee]{Han Ju Lee}
\address[Lee]{Department of Mathematics Education,
Dongguk University - Seoul, 100-715 Seoul, Republic of Korea}
\email{\texttt{hanjulee@dongguk.edu}}

\author[Mart\'{\i}n]{Miguel Mart\'{\i}n}
\address[Mart\'{\i}n]{Departamento de An\'{a}lisis Matem\'{a}tico,
Facultad de Ciencias,
Universidad de Granada,
E-18071 Granada, Spain}
\email{\texttt{mmartins@ugr.es}}

\dedicatory{Dedicated to the memory of Joram Lindenstrauss and Robert Phelps}

\date{May 27th, 2013; Revised August 27th, 2014}

\subjclass[2000]{Primary 46B20; Secondary 46B04, 46B22}

\keywords{Banach space, approximation, norm-attaining operators, Bishop-Phelps-Bollob\'{a}s theorem.}

\thanks{First author partially supported by Spanish MICINN and FEDER Project MTM2008-03211.
Second  author supported by Basic Science Research Program
through the National Research Foundation of Korea (NRF) funded by the
Ministry of Education (No.2010-0008543 and No. 2013053914).
Fourth author partially supported by Basic Science Research Program through the National Research Foundation of Korea(NRF) funded by the Ministry of Education, Science and Technology (NRF-2012R1A1A1006869).
Fifth author partially supported by Spanish MICINN and FEDER project no.~MTM2012-31755, Junta de Andaluc\'{\i}a and FEDER grants FQM-185 and P09-FQM-4911, and by ``Programa Nacional de Movilidad de Recursos Humanos del Plan Nacional de I+D+i 2008--2011'' of the Spanish MECD}

\begin{abstract}
We study a Bishop-Phelps-Bollob\'{a}s version of Lindenstrauss properties A and B. For domain spaces, we study Banach spaces $X$ such that $(X,Y)$ has the Bishop-Phelps-Bollob\'{a}s property (BPBp) for every Banach space $Y$. We show that in this case, there exists a universal function $\eta_X(\eps)$ such that for every $Y$, the pair $(X,Y)$ has the BPBp with this function. This allows us to prove some necessary isometric conditions for $X$ to have the property. We also prove that if $X$ has this property in every equivalent norm, then $X$ is one-dimensional. For range spaces, we study Banach spaces $Y$ such that $(X,Y)$ has the Bishop-Phelps-Bollob\'{a}s property for every Banach space $X$. In this case, we show that there is a universal function $\eta_Y(\eps)$ such that for every $X$, the pair $(X,Y)$ has the BPBp with this function. This implies that this property of $Y$ is strictly stronger than Lindenstrauss property B. The main tool to get these results is the study of the Bishop-Phelps-Bollob\'{a}s property for $c_0$-, $\ell_1$- and $\ell_\infty$-sums of Banach spaces.
\end{abstract}

\maketitle

\section{Introduction}

In his seminal work of 1963, J.~Lindenstrauss \cite{Lindens} examined the extension of the Bishop-Phelps theorem, on denseness
of the family of norm-attaining scalar-valued functionals on a Banach space, to vector-valued linear operators. In this paper,
he introduced two universal properties, $A$ and $B,$ that a Banach space might have. (All notions mentioned here will be
reviewed later in this Introduction.) Seven years later, B.~Bollob\'as observed
that there is a numerical version of the Bishop-Phelps theorem, and his contribution is known as the Bishop-Phelps-Bollob\'as
theorem. Recently, vector-valued versions of this result have been studied (see, e.g., \cite{AAGM2}). Our goal here is to
introduce and study analogues of properties $A$ and $B$ in the context of vector-valued versions of the Bishop-Phelps-Bollob\'as
theorem.\\

Let us now restart the Introduction, this time giving the necessary background material to help make the paper entirely accessible.
The Bishop-Phelps-Bollob\'{a}s property was introduced in 2008
\cite{AAGM2} as an extension of the Bishop-Phelps-Bollob\'{a}s
theorem to the vector valued case. It can be regarded as a
``quantitative version'' of the study of norm-attaining operators
initiated by J.~Lindenstrauss in 1963. We begin with some notation
to present its definition. Let $X$ and $Y$ be Banach spaces over the
field $\K=\R~\mbox{or}~\C$. We will use the common notation $S_X$,
$B_X$, $X^*$ for the unit sphere, the closed unit ball and the dual
space of $X$ respectively, $L(X,Y)$ for the Banach space of all
bounded linear operators from $X$ into $Y,$ and $NA(X,Y)$ for the
subset of all norm-attaining operators. (We say that an operator
$T\in L(X,Y)$ \emph{attains its norm} if $\|T\|=\|Tx\|$ for some
$x\in S_X$).  We will abbreviate $L(X,X)$,
resp.\ $NA(X,X)$, by $L(X)$, resp.\ $NA(X)$.

\begin{definition}[\textrm{\cite[Definition~1.1]{AAGM2}}]
A pair of Banach spaces $(X,Y)$ is said to have the
\emph{Bishop-Phelps-Bollob\'{a}s property} (\emph{BPBp} for short)
if for every $\eps\in (0,1)$ there is $\eta(\eps)>0$ such that for every
$T_0\in L(X,Y)$ with $\|T_0\|=1$ and every $x_0\in S_X$ satisfying
$$
\|T_0(x_0)\|>1-\eta(\eps),
$$
there exist $S\in L(X,Y)$ and $x\in S_X$ such that
$$
1=\|S\|=\|Sx\|,\qquad \|x_0-x\|<\eps \qquad \text{and} \qquad \|T_0-T\|<\eps.
$$
In this case, we will say that $(X,Y)$ has the BPBp with function $\eps\longmapsto \eta(\eps)$.
\end{definition}

The study of the ``denseness of norm-attaining things'' goes back to the celebrated Bishop-Phelps
theorem \cite{BishopPhelps} which appeared in 1961. This theorem simply states
that $NA(X,\K)$ is dense in $X^*$ for every Banach space $X$. The problem of the
denseness of $NA(X,Y)$ in $L(X,Y)$ was given in that paper, and in 1963
J.~Lindenstrauss \cite{Lindens} provided a simple example to show that it is not true in general. On the other hand,
motivated by some problems in numerical range theory, in 1970 B.~Bollob\'{a}s \cite{Bollobas} gave
a ``quantitative version'' of the Bishop-Phelps theorem  which, in the above language, states that the
pair $(X,\K)$ has the BPBp for every Banach space $X$. Nowadays, this result is known as the
Bishop-Phelps-Bollob\'{a}s theorem. We refer to the expository paper \cite{Acosta-RACSAM} for a
detailed account on norm-attaining operators; any result not explicitly referred to below can be found there.

As the problem of the denseness of norm attaining operators is so general, J. Lindenstrauss \cite{Lindens}
introduced and studied the following two properties. A Banach space $X$ is said to have \emph{Lindenstrauss property A}
if $\overline{NA(X,Z)}=L(X,Z)$ for every Banach space $Z$. A Banach space $Y$ is said to have
\emph{Lindenstrauss property B} if $\overline{NA(Z,Y)}=L(Z,Y)$ for every Banach space $Z$. We remark
that both properties A and B are isometric in nature (that is, they depend upon the particular norm).

Lindenstrauss property A is trivially satisfied by finite-dimensional spaces by compactness of the closed unit ball, and it is also satisfied by reflexive spaces \cite{Lindens}. J.~Bourgain \cite{Bou} proved in 1977 that the
Radon-Nikod\'{y}m property characterizes Lindenstrauss property A isomorphically: a Banach space has the
Radon-Nikod\'{y}m property if and only if it has Lindenstrauss property A in every equivalent norm. From the isometric viewpoint, W.~Schachermayer \cite{Schacher} introduced property $\alpha,$ which
implies Lindenstrauss property A. It is satisfied, for instance, by $\ell_1$, and it is also satisfied in many
Banach spaces, including all separable spaces, after an equivalent renorming.  There is a weakening of
property $\alpha$, called \emph{property quasi-$\alpha$} introduced by Y.~S.~Choi and H.~G.~Song \cite{CS-quasialpha}
which  still implies Lindenstrauss property A. On the other hand, examples of Banach spaces
failing Lindenstrauss property A are $c_0$, non-atomic $L_1(\mu)$ spaces, $C(K)$ spaces for infinite
and metrizable $K$, and the canonical predual of any Lorentz sequence space $d(w,1)$ with
$w\in \ell_2\setminus \ell_1$. Finally, we mention that Lindenstrauss property A is stable under arbitrary
$\ell_1$-sums \cite{CS-quasialpha}.

Much less is known about Lindenstrauss property B. The base field $\K$ clearly has it, since this is just the Bishop-Phelps theorem. However, it is unknown whether every finite-dimensional space has Lindenstrauss property B,
even for two-dimensional Euclidean space. J.~Lindenstrauss gave an isometric sufficient condition
for property B, called \emph{property $\beta$}, which is satisfied by polyhedral finite-dimensional spaces, and by
any space between $c_0$ and $\ell_\infty$ inclusive. It was proved later by J.~Partington \cite{Partington} that
every Banach space can be equivalently renormed to have property $\beta$ and, therefore, to have Lindenstrauss
property B. There is a weakening of property $\beta$, called \emph{property quasi-$\beta$} introduced by
M.~Acosta, F.~Aguirre and R.~Pay\'{a} \cite{AAP-quasibeta} which provides new examples of spaces with Lindenstrauss
property B, such as some non-polyhedral finite-dimensional spaces and the canonical predual of any Lorentz sequence
space $d(w,1)$ with $w\in c_0\setminus \ell_1$. Among spaces without Lindenstrauss property B we find infinite-dimensional
$L_1(\mu)$ spaces, $C[0,1]$, $d(w,1)$ with $w\in \ell_2\setminus \ell_1$ and every infinite-dimensional strictly convex
space (or $\C$-rotund space in the complex case); in particular $\ell_p$ for $1<p<\infty$ all fail Lindenstrauss
property B. Let us finish by mentioning that Lindenstrauss property B is stable under arbitrary $c_0$-sums \cite[Proposition~3]{AAP-quasibeta}.

Since its introduction in 2008, quite a few papers regarding the Bishop-Phelps-Bollob\'{a}s property have been
published (see, e.g., \cite{ABGM,ACGM-Adv,CasGuiKad,CS,KimLee}). Among others, the following pairs
have been shown to have the BPBp: $(L_1(\mu),L_\infty[0,1])$ for every $\sigma$-finite measure $\mu$, $(X,C_0(L))$ for every Asplund
space $X$ and every Hausdorff locally compact space $L$, $(X,Y)$ when $X$ is uniformly convex or $Y$ has property
$\beta$, or both $X$ and $Y$ are finite-dimensional.

In this paper, we will deal with the following definitions, which are exactly the BPB versions of Lindenstrauss properties A and B.

\begin{definition} Let $X$ and $Y$ be Banach spaces.
We say that $X$ is a \emph{universal BPB domain space} if for every Banach space $Z$, the pair $(X,Z)$ has the BPBp.
We say that $Y$ is a \emph{universal BPB range space} if for every Banach space $Z$, the pair $(Z,Y)$ has the BPBp.
\end{definition}

The following assertions are clearly true:
\begin{enumerate}
\item[(a)] a universal BPB domain space has Lindenstrauss property A,
\item[(b)] a universal BPB range space has Lindenstrauss property B.
\end{enumerate}

The converse of (a) is known to be false: the space $\ell_1$ has
Lindenstrauss property A but fails to be a universal BPB domain
space \cite[Theorem~ 4.1]{{AAGM2}}.  Even more, every finite-dimensional Banach space clearly has
Lindenstrauss property A, but $\ell_1^2$ fails to be a
universal BPB domain space. (We will prove this later in
Corollary~\ref{cor:ell12-strictlyconvex}, but it can be found by ``surfing''
into the details of the proofs in \cite{AAGM2}.) Also, a two-dimensional real space is a universal
BPB domain space if and only if it is uniformly convex \cite[Corollary~9]{KimLee} (see Corollary~\ref{tool123} below).

The validity of the converse of (b) has been pending from the beginning of the study of the BPBp, since
the basic examples of spaces with Lindenstrauss property B, i.e.\ those having property $\beta$, are actually universal
BPB range spaces \cite[Theorem~2.2]{AAGM2}. In \S 4, we will provide an example of a Banach space having Lindenstrauss property B but
failing to be a universal BPB range space.

\medskip

One of the main tools in this paper is to compare the function
$\eta(\eps)$ appearing in the definition of the BPBp for
different pairs of spaces. The following notation will make clear
what we mean.

\begin{notation}
Fix a pair $(X,Y)$ of Banach spaces and write
$$
\Pi(X,Y)= \{(x,T)\in X\times L(X,Y)\,:\, \|T\|=\|x\|=\|Tx\|=1\}.
$$
For every $\rho\in (0,1),$ let
$$
{\mathcal S}(X,Y)(\rho):=\bigl\{(S,x)\in L(X,Y)\times X \, :\, \|S\|=\|x\|=1, \ \|Sx\|> 1-\rho\bigr\} .
$$
Also, for every $\eps\in (0,1)$ we define $\eta(X,Y)(\eps)$ to be the supremum of the set consisting of $0$ and
those $\rho>0$ such that for all pairs $(T_0,x_0)\in \mathcal{S}(X,Y)(\rho)$, there exists a pair $(T,x)\in \Pi(X,Y)$ such that $\|x_0-x\|<\eps$ and $\|T_0 - T\|<\eps$. Equivalently,
\begin{equation*}
\eta(X,Y)(\eps)=\inf\bigl\{1-\|Tx\|\,:\ x\in S_X,\, T\in L(X),\, \|T\|=1,\  \dist\bigl((x,T),\Pi(X,Y)\bigr)\geq\eps\bigr\},
\end{equation*}
where $\dist\bigl((x,T),\Pi(X,Y)\bigr)= \inf\bigl\{\max\{\|x-y\|,\|T-S\|\}\ :\ (y,S)\in \Pi(X,Y)\bigr\}$.
\end{notation}

It is clear that the pair $(X,Y)$ has the BPBp if and only if
$\eta(X,Y)(\eps)>0$ for every $\eps\in (0,1)$. By construction, if a function $\eps\longmapsto
\eta(\eps)$ is valid in the definition of the BPBp for the pair $(X,Y)$, then $\eta(\eps)\leq
\eta(X,Y)(\eps)$. That is, $\eta(X,Y)(\eps)$ is the best function (i.e.\ the largest) we can find to ensure
that $(X,Y)$ has the BPBp. It is also immediate that $\eta(X,Y)(\eps)$ is increasing with respect to $\eps$.

\vspace{.2in}

We now present the main results of this paper. We first study the
behavior of the BPBp with respect to direct sums of Banach spaces.
Specifically, we prove that given two families $\{X_i\, :\, i \in
I\}$ and $\{Y_j\, :\, j\in J\}$ of Banach spaces, if $X$ is the $c_0$-, $\ell_1$- or $\ell_\infty$-sum of
$\{X_i\}$ and $Y$ is the $c_0$-, $\ell_1$- or $\ell_\infty$-sum of $\{Y_j\}$, then
\begin{equation*}
\eta(X,Y)(\eps)\leq \eta(X_i,Y_j)(\eps) \qquad \bigl(i\in I,\ j\in J\bigr).
\end{equation*}
Therefore, if the pair $(X,Y)$ has the BPBp, then every pair
$(X_i,Y_j)$ does with a non-worse function $\eta$. The main
consequence of this result is that every universal BPB space has a
``universal'' function $\eta$. That is, if $X$ is a universal BPBp
domain space, then
$$
\inf\{\eta(X,Z)(\eps)\,:\,Z\ \text{Banach space}\}>0 \qquad \bigl(\eps\in(0,1)\bigr);
$$
if $Y$ is a universal BPBp range space, then
$$
\inf\{\eta(Z,Y)(\eps)\,:\,Z\ \text{Banach space}\}>0 \qquad \bigl(\eps\in(0,1)\bigr).
$$
With this fact in mind, we see from \cite{KimLee} (where the
existence of a universal function was hypothesized) some necessary
conditions for a Banach space to be a universal BPB domain space. In particular, a real, two-dimensional
universal BPB domain space must be uniformly convex. Another related result is that for Banach spaces $X$ and
$Y$ and a compact Hausdorff space $K$, if the pair $(X,C(K,Y))$ has the BPBp, then so does $(X,Y)$. We also
provide a new result on stability of density of norm attaining operators from which we can deduce that $NA(X)$
is not dense in $L(X)$ for the space $X=C[0,1]\oplus_1 L_1[0,1]$.

With respect to domain spaces, we obtain the following result: If a Banach space
contains a non-trivial $L$-summand, then it is not a universal BPB domain space. Hence every Banach space of dimension
greater than one can be equivalently renormed so as to not be a universal BPB domain space.

With respect to range spaces, we provide an example of a Banach space $\mathcal{Y}$ which is
the $c_0$-sum of a family of polyhedral spaces of dimension two such that $(\ell_1^2,\mathcal{Y})$ does not
have the BPBp. This is the announced result of a Banach
space having Lindenstrauss property B (in fact, even having property quasi-$\beta$ of \cite{AAP-quasibeta}) and
yet failing to be a universal BPB range space. It also follows that being a universal BPB range space is not stable under
infinite $c_0$- or $\ell_\infty$-sums.

\vspace{.2in}

The outline of the paper is as follows. Section~\ref{sec:tools}
contains the results on direct sums which will be the main tools for the rest of the paper. We give in
Section~\ref{sec:domain} the results related to
domain spaces, while our results related to range spaces are
discussed in Section~\ref{sec:range}.

\section{The tools: Bishop-Phelps-Bollob\'{a}s and direct sums}\label{sec:tools}

Our aim in this section is to study the relationship between the
BPBp and certain direct sums of Banach spaces. We will use these
results later to get the main theorems of the paper.

Our principal interest in this section deals with $c_0$-, $\ell_1$-,
and $\ell_\infty$-sums of Banach spaces. We need some notation.
Given a family $\{X_\lambda\, : \, \lambda\in\Lambda\}$ of Banach
spaces, we denote by $\left[\bigoplus_{\lambda\in\Lambda}
X_\lambda\right]_{c_0}$ (resp.\ $\left[\bigoplus_{\lambda\in\Lambda}
X_\lambda\right]_{\ell_1}$, $\left[\bigoplus_{\lambda\in\Lambda}
X_\lambda\right]_{\ell_\infty}$) the $c_0$-sum (resp.\ $\ell_1$-sum,
$\ell_\infty$-sum) of the family. In case $\Lambda$ has just two
elements, we use the simpler notation $X\oplus_\infty Y$ and
$X\oplus_1 Y$.

\begin{theorem}\label{th:main-sums-together}
Let $\{X_i\, :\, i \in I\}$ and $\{Y_j\, :\, j\in J\}$ be families
of Banach spaces, let $X$ be the $c_0$-, $\ell_1$-, or
$\ell_\infty$-sum of $\{X_i\}$ and let $Y$ be the $c_0$-, $\ell_1$-,
or $\ell_\infty$-sum of $\{Y_j\}$. If the pair $(X,Y)$ has the BPBp
with $\eta(\eps)$, then the pair $(X_i,Y_j)$ also has the BPBp with
$\eta(\eps)$ for every $i\in I$, $j\in J$. In other words,
$$
\eta(X,Y)\leq \eta(X_i,Y_j)\qquad \bigl(i\in I,\ j\in J\bigr).
$$
\end{theorem}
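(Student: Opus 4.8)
The plan is to fix indices $i \in I$ and $j \in J$ and show that the pair $(X_i, Y_j)$ inherits the BPBp with the same function $\eta$, by "lifting" a near-norm-attaining situation on $(X_i, Y_j)$ to one on $(X, Y)$, applying the hypothesis there, and then "projecting" the resulting exact norm-attaining pair back down. For this I would work with the canonical inclusion $\iota_i \colon X_i \to X$ and the canonical projection $P_i \colon X \to X_i$ (which are norm-one contractions for each of the three sum types, with $P_i \iota_i = \mathrm{Id}_{X_i}$), and similarly with $\iota_j \colon Y_j \to Y$ and $P_j \colon Y \to Y_j$. The key elementary observation is that these maps interact well with the sup/sum norms: if $(x_0, T_0) \in \mathcal{S}(X_i, Y_j)(\rho)$, one forms $\widetilde{x_0} = \iota_i(x_0) \in S_X$ and $\widetilde{T_0} = \iota_j \circ T_0 \circ P_i \in L(X,Y)$; one checks $\|\widetilde{T_0}\| = \|T_0\| = 1$ (the projection $P_i$ has norm one, $\iota_j$ is isometric) and $\|\widetilde{T_0}(\widetilde{x_0})\| = \|\iota_j T_0 P_i \iota_i x_0\| = \|T_0 x_0\| > 1 - \rho$, so $(\widetilde{x_0}, \widetilde{T_0}) \in \mathcal{S}(X,Y)(\rho)$.

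Next, assuming $\rho < \eta(\eps)$, the hypothesis gives a pair $(S, z) \in \Pi(X,Y)$ with $\|\widetilde{x_0} - z\| < \eps$ and $\|\widetilde{T_0} - S\| < \eps$. Now I would push this back down by setting $x := P_i(z) \in X_i$ and $T := P_j \circ S \circ \iota_i \in L(X_i, Y_j)$. The estimates $\|x_0 - x\| = \|P_i \widetilde{x_0} - P_i z\| \leq \|\widetilde{x_0} - z\| < \eps$ and $\|T_0 - T\| = \|P_j \widetilde{T_0} \iota_i - P_j S \iota_i\| \leq \|\widetilde{T_0} - S\| < \eps$ are immediate from $\|P_i\|, \|P_j\|, \|\iota_i\| \leq 1$ and $P_i \widetilde{x_0} = x_0$, $P_j \widetilde{T_0}\iota_i = T_0$. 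The genuinely delicate point — and the one I expect to be the main obstacle — is verifying that $(x, T) \in \Pi(X_i, Y_j)$, i.e. that $\|x\| = \|T\| = \|Tx\| = 1$ \emph{exactly}. The issue is that compressing an exact norm-attaining operator by a projection could a priori strictly decrease norms. The way around this is to observe where $S$ attains its norm: $\|S z\| = 1$ with $z \in S_X$. One must argue, using the structure of the sum, that the $i$-th coordinate of $z$ already "carries all the mass," at least approximately, and then — crucially — not merely approximately but that one can choose $(S,z)$ so that it does so exactly, or alternatively perturb once more.

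To handle that obstacle cleanly I would split into the two structurally different cases. When $Y$ is an $\ell_\infty$- or $c_0$-sum, $1 = \|Sz\| = \sup_k \|P_k S z\|$ (a supremum, attained or approached), and when $Y$ is an $\ell_1$-sum, $1 = \sum_k \|P_k S z\|$; symmetrically the norm on $X$ constrains $z$. The cleanest route, rather than fighting to make the single application of the hypothesis land exactly in the right coordinate, is to first reduce to the case $I = J = \{1, 2\}$ and in fact to the four "extreme" configurations, and to exploit that the hypothesis of the theorem lets one choose $\eps$ arbitrarily small: one runs the argument with $\eps/2$, obtains $(S,z)$ close to $(\widetilde{T_0}, \widetilde{x_0})$ within $\eps/2$, notes that then $z$ is within $\eps/2$ of $\iota_i(S_{X_i})$ so its "off-$i$" part is small, and finally replaces $S$ and $z$ by their compressions and \emph{renormalizes}: since $\|P_i z\| \geq 1 - \eps/2 > 0$ and $\|P_j S \iota_i (P_i z / \|P_i z\|)\|$ is close to $1$, a harmless rescaling of the operator by a factor in $[1, (1-\eps/2)^{-1}]$ produces an \emph{exact} member of $\Pi(X_i, Y_j)$ while keeping all distances below $\eps$. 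Assembling these pieces — the lift, the application of the hypothesis with a slightly smaller $\eps$, the compression, and the renormalization — and checking the triangle-inequality bookkeeping in each of the (finitely many) sum-type combinations yields $\eta(X_i, Y_j)(\eps) \geq \eta(X,Y)(\eps)$, which is the claim.
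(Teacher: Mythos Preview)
Your lifting-and-projecting setup is exactly how the paper proceeds, and the estimates $\|x_0-x\|<\eps$ and $\|T_0-T\|<\eps$ for the compressed pair are correct. The gap is in the step you yourself flag as delicate: establishing that the compressed pair lies in $\Pi(X_i,Y_j)$.

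Your proposed fix---apply the hypothesis with $\eps/2$ and then rescale---does not work, for two independent reasons. First, rescaling an operator so that its norm is $1$ does not make it \emph{attain} its norm; likewise rescaling so that $\|T x\|=1$ at a specific $x$ may push $\|T\|$ above $1$. Concretely, in the $\ell_1$-range case write $S=(S_1,S_2)$: from $\|S_2\|<\eps/2$ you only get $\|S_1\|>1-\eps/2$, and there is no reason for $S_1\iota_i$ to attain its norm anywhere, let alone at $P_i z/\|P_i z\|$. So ``a harmless rescaling \ldots\ produces an exact member of $\Pi(X_i,Y_j)$'' is unjustified. Second, even if that step could be made to work, applying the hypothesis with $\eps/2$ would only give $\eta(X_i,Y_j)(\eps)\geq\eta(X,Y)(\eps/2)$, which is strictly weaker than the claimed $\eta(X_i,Y_j)(\eps)\geq\eta(X,Y)(\eps)$.

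The paper avoids both problems by handling domain and range separately and never renormalizing. For the $c_0$/$\ell_\infty$-domain reduction it uses the convexity identity
\[
\widetilde{S}(x_1',x_2')=(1-\eps)\,\widetilde{S}(x_1',0)+\eps\,\widetilde{S}(x_1',\eps^{-1}x_2'),
\]
which, since the left side has norm $1$ and both terms on the right lie in $B_Y$, forces $\|\widetilde{S}(x_1',0)\|=1$ exactly. For the $\ell_1$-range reduction it does \emph{not} take $P_j S\iota_i$; instead it picks $y^*=(y_1^*,y_2^*)$ with $y^*(\widetilde{S}x_0)=1$ and defines
\[
Sx \;=\; S_1x \;+\; y_2^*(S_2x)\,\frac{S_1x_0}{\|S_1x_0\|},
\]
folding the $Y_2$-part back into $Y_1$ in a way that preserves $\|S\|\leq1$, forces $\|Sx_0\|=1$, and still satisfies $\|S-T_0\|<\eps$. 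These two devices are the missing ideas; with them no $\eps/2$ slack is needed and the same $\eta(\eps)$ carries over.
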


This theorem will be obtained by simply combining Propositions
\ref{fam}, \ref{fam2}, and \ref{fam3} below. In some cases, we
are able to provide partial converses.

Before providing these propositions, and therefore the proof of
Theorem~\ref{th:main-sums-together}, we present its main
consequence: universal BPB spaces have ``universal'' functions
$\eta$. In subsequent sections, we will frequently appeal to the
following result.

\begin{corollary}\label{corollary:universal-eta} Let $X$ and $Y$ be Banach spaces.
\begin{enumerate}
\item[(a)] If $X$ is a universal BPB
domain space, then there is a function $\eta_X:(0,1)\longrightarrow
\R^+$ such that for every Banach space $Y$, $(X,Y)$ has the BPBp
with $\eta_X$. In other words, for every $Y$, $\eta(X,Y)\geq
\eta_X$.
\item[(b)] If $Y$ is a universal BPB range space, then there is
a function $\eta_Y:(0,1)\longrightarrow \R^+$ such that for every
Banach space $X$, $(X,Y)$ has the BPBp with $\eta_Y$. In other
words, for every $X$, $\eta(X,Y)\geq \eta_Y$.
\end{enumerate}
\end{corollary}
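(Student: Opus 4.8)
The plan is to derive the corollary from Theorem~\ref{th:main-sums-together} by a single trick: every Banach space embeds isometrically as a summand of a large $c_0$- (or $\ell_\infty$-) sum in a way that fits the hypotheses of the theorem. Concretely, for part (a), suppose $X$ is a universal BPB domain space. Let $\{Y_j : j \in J\}$ be the class of all Banach spaces (one may restrict to a set of representatives, e.g.\ all spaces of density character at most some fixed cardinal, handled one cardinal at a time, or simply work with the proper class of all Banach spaces and argue formally, since the conclusion is pointwise in $\eps$). Form $Y = \left[\bigoplus_{j\in J} Y_j\right]_{c_0}$. Since $X$ is a universal BPB domain space, the pair $(X,Y)$ has the BPBp, say with function $\eta_X(\eps) := \eta(X,Y)(\eps) > 0$. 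Now $X$ is (trivially) the $c_0$-sum of the one-element family $\{X\}$, and $Y$ is the $c_0$-sum of $\{Y_j\}$, so Theorem~\ref{th:main-sums-together} applies and yields $\eta(X,Y) \leq \eta(X,Y_j)$ for every $j\in J$. Hence $\eta(X,Y_j)(\eps) \geq \eta_X(\eps)$ for every $j$ and every $\eps\in(0,1)$; since $\{Y_j\}$ runs over all Banach spaces, this is exactly the assertion that $(X,Z)$ has the BPBp with the fixed function $\eta_X$ for every Banach space $Z$.

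Part (b) is symmetric. If $Y$ is a universal BPB range space, let $\{X_i : i\in I\}$ enumerate (representatives of) all Banach spaces, put $X = \left[\bigoplus_{i\in I} X_i\right]_{c_0}$, observe $(X,Y)$ has the BPBp with some positive function $\eta_Y := \eta(X,Y)$, and apply Theorem~\ref{th:main-sums-together} (with $Y$ the $\ell_1$-, $c_0$-, or $\ell_\infty$-sum of the trivial one-element family $\{Y\}$) to conclude $\eta(X_i,Y)(\eps) \geq \eta_Y(\eps)$ for all $i\in I$, $\eps\in(0,1)$. That is the desired universal function for range space $Y$.

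The only genuine subtlety — and what I expect to be the main point to address carefully — is the set-theoretic one: the ``collection of all Banach spaces'' is a proper class, not a set, so one cannot literally form the $c_0$-sum indexed by it. The clean fix is to note that it suffices to prove, for each cardinal $\kappa$, that there is a function $\eta_X^\kappa$ working for all Banach spaces of density character $\leq \kappa$ (take $Y$ to be the $c_0$-sum over a set of representatives of that subclass), and then to observe that the functions $\eta_X^\kappa$ are nested in the obvious way ($\eta_X^{\kappa'} \leq \eta_X^\kappa$ whenever $\kappa \leq \kappa'$, since the larger sum contains the smaller as a summand, so Theorem~\ref{th:main-sums-together} again gives the comparison), and hence — because any \emph{single} target space $Z$ in the definition of ``universal BPB domain space'' has \emph{some} density character $\kappa_Z$ — the function $\eta_X := \eta_X^{\kappa_0}$ for any fixed infinite $\kappa_0$ majorizing the separable case already does the job for all separable $Z$, while for the general statement one simply records that for each $Z$ the value $\eta(X,Z)(\eps) \geq \eta_X^{\kappa_Z}(\eps)$, and these bounds are decreasing in $\kappa_Z$; fixing $\eta_X$ to be the infimum over $\kappa$ of $\eta_X^\kappa$ (which is still positive for each $\eps$ because... ) — here one must be a little careful, so in practice the paper will just phrase it as: for every $\eps$ there is a single positive number bounding all the $\eta(X,Z)(\eps)$ from below, obtained from one sufficiently large $c_0$-sum, and that is all the subsequent applications use. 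Everything else is a direct quotation of Theorem~\ref{th:main-sums-together} together with the trivial observation that $X = [\bigoplus_{\lambda\in\{*\}} X]_{c_0}$.
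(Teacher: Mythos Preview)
Your core idea is exactly the paper's: pass to a $c_0$-sum on the range side (resp.\ an $\ell_1$- or $c_0$-sum on the domain side) and invoke Theorem~\ref{th:main-sums-together}. The difference is entirely in how the set-theoretic obstacle is handled, and here your proposal has a genuine gap that you yourself flag with ``\dots''. Your stratification by density character produces a decreasing net $\eta_X^\kappa(\eps)$ indexed by a proper class of cardinals, and the step ``$\inf_\kappa \eta_X^\kappa(\eps)>0$ because\dots'' is precisely the point that needs an argument; your final sentence about ``one sufficiently large $c_0$-sum'' does not supply one, since no single sum covers all Banach spaces.

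The paper avoids all of this with a one-line contradiction: fix $\eps$, suppose $\inf_Z \eta(X,Z)(\eps)=0$, pick a \emph{countable} sequence $\{Y_n\}$ with $\eta(X,Y_n)(\eps)\to 0$, and form the legitimate Banach space $Y=\bigl[\bigoplus_{n}Y_n\bigr]_{c_0}$. Since $X$ is a universal BPB domain space, $\eta(X,Y)(\eps)>0$, but Theorem~\ref{th:main-sums-together} gives $\eta(X,Y)(\eps)\leq \eta(X,Y_n)(\eps)$ for every $n$, a contradiction. Part~(b) is identical with $\ell_1$-sums on the domain side. Note that your density-character route \emph{can} be completed, but only by running this same countable-extraction argument at the level of cardinals (choose $\kappa_n$ with $\eta_X^{\kappa_n}(\eps)\to 0$, pass to $\kappa=\sup_n\kappa_n$), so the detour buys nothing.
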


\begin{proof}
(a). Assume that $(X,Y)$ has the BPBp for every Banach space $Y$,
but no such universal function $\eta_X$ exists. Then, for some
$\eps>0$, there exists a sequence of Banach spaces $\{Y_n\}$ such
that $(X,Y_n)$ has the BPBp and $\eta(X,Y_n)(\eps)\longrightarrow 0$
when $n\to \infty$. But if we consider the space
$Y=\big[\bigoplus_{n\in \N}Y_j\big]_{c_0}$, then $(X,Y)$ has the
BPBp by the BPB universality of $X$, and
Theorem~\ref{th:main-sums-together} (actually Proposition~\ref{fam}
below) gives $\eta(X,Y_n)(\eps)\geq \eta(X,Y)(\eps)>0$, a
contradiction.

(b). The same idea as in (a) works, with $\ell_1$-sums of domain
spaces substituting $c_0-$sums of range spaces.
\end{proof}

We are now ready to provide the following three propositions which will give
Theorem~\ref{th:main-sums-together}. The first is the most natural
case: $\ell_1$-sums of domain spaces and $c_0$- or
$\ell_\infty$-sums of range spaces.

\begin{proposition}\label{fam}
Let $\{X_i\, :\, i \in I\}$ and $\{Y_j\, :\, j\in J\}$ be families
of Banach spaces, $X=\big[\bigoplus_{i\in I}X_i\big]_{\ell_1}$ and
$Y=\big[\bigoplus_{j\in J}Y_j\big]_{\ell_{\infty}}$ or
$Y=\big[\bigoplus_{j\in J}Y_j\big]_{c_0}$. If the pair $(X,Y)$ has
the BPBp with $\eta(\eps)$, then $(X_i,Y_j)$ also has the
BPBp with $\eta(\eps)$ for every $i\in I$, $j\in J$. In other words,
$$
\eta(X,Y)(\eps)\leq \eta(X_i,Y_j)(\eps) \qquad \bigl(i\in I,\ j\in
J\bigr).
$$
\end{proposition}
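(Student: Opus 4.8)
The plan is to transfer the Bishop--Phelps--Bollob\'as data back and forth between the factor $(X_i,Y_j)$ and the big pair $(X,Y)$ by means of the canonical norm-one maps. Write $P_i\colon X\to X_i$ and $Q_i\colon X_i\to X$ for the canonical projection and inclusion coming from the $\ell_1$-sum, and $\Pi_j\colon Y\to Y_j$, $Q_j\colon Y_j\to Y$ for those coming from the $c_0$- or $\ell_\infty$-sum. All four have norm one, $Q_i$ and $Q_j$ are isometries, $P_iQ_i=\Id_{X_i}$, $\Pi_jQ_j=\Id_{Y_j}$, $\Pi_kQ_j=0$ for $k\neq j$, and $P_i(B_X)=B_{X_i}$ precisely because $X$ is an $\ell_1$-sum. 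These identities are all I will use about the two sums on the ``easy'' sides.

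Fix $\eps\in(0,1)$ and suppose $T_0\in L(X_i,Y_j)$ and $x_0\in S_{X_i}$ satisfy $\|T_0\|=1$ and $\|T_0x_0\|>1-\eta(\eps)$. I would lift this to $X$ and $Y$ by setting
$$
\widetilde{x}_0:=Q_ix_0\in S_X,\qquad \widetilde{T}_0:=Q_jT_0P_i\in L(X,Y).
$$
Since $P_i(B_X)=B_{X_i}$ and $Q_j$ is an isometry one checks $\|\widetilde{T}_0\|=\|T_0\|=1$, and $\widetilde{T}_0\widetilde{x}_0=Q_jT_0x_0$ gives $\|\widetilde{T}_0\widetilde{x}_0\|=\|T_0x_0\|>1-\eta(\eps)$. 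Applying the hypothesis that $(X,Y)$ has the BPBp with $\eta(\eps)$, there are $S\in L(X,Y)$ and $x\in S_X$ with $\|S\|=\|Sx\|=1$, $\|\widetilde{x}_0-x\|<\eps$ and $\|\widetilde{T}_0-S\|<\eps$.

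The heart of the matter is to push $(S,x)$ back down to a norm-attaining pair for $(X_i,Y_j)$ without losing the $\eps$-closeness. Since $\Pi_k\widetilde{T}_0=0$ for $k\neq j$, we get $\|\Pi_kS\|<\eps$ for every $k\neq j$, hence $\sup_{k\neq j}\|\Pi_kSx\|\leq\eps<1$; as $\|Sx\|_Y=\sup_k\|\Pi_kSx\|=1$ in both the $c_0$- and the $\ell_\infty$-sum, this forces $\|\Pi_jSx\|=1$ exactly. Writing $x=\sum_kQ_kx_k$ in $X$, from $\|\widetilde{x}_0-x\|<\eps$ and the form of the $\ell_1$-norm we read off $\|x_0-x_i\|<\eps$ and $1-\|x_i\|=\sum_{k\neq i}\|x_k\|<\eps$, so in particular $x_i\neq0$. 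A pinching estimate,
$$
1=\|\Pi_jSx\|\leq \|\Pi_jSQ_ix_i\|+\sum_{k\neq i}\|x_k\|\leq \|\Pi_jSQ_ix_i\|+(1-\|x_i\|),
$$
together with the trivial bound $\|\Pi_jSQ_ix_i\|\leq\|x_i\|$, yields $\|\Pi_jSQ_ix_i\|=\|x_i\|$.

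Finally I would set $R:=\Pi_jSQ_i\in L(X_i,Y_j)$ and $y:=x_i/\|x_i\|\in S_{X_i}$. Then $\|R\|\leq1$ while $\|Ry\|=\|\Pi_jSQ_ix_i\|/\|x_i\|=1$, so $\|R\|=\|Ry\|=1$; moreover $\|y-x_0\|\leq(1-\|x_i\|)+\|x_i-x_0\|<\eps$, and since $T_0=\Pi_j\widetilde{T}_0Q_i$ we get $\|R-T_0\|=\|\Pi_j(S-\widetilde{T}_0)Q_i\|\leq\|S-\widetilde{T}_0\|<\eps$. Thus $(R,y)\in\Pi(X_i,Y_j)$ is the desired pair, which proves that $(X_i,Y_j)$ has the BPBp with $\eta(\eps)$, i.e.\ $\eta(X,Y)(\eps)\leq\eta(X_i,Y_j)(\eps)$. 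The step I expect to be the main obstacle is obtaining the two exact identities $\|\Pi_jSx\|=1$ and then $\|\Pi_jSQ_ix_i\|=\|x_i\|$: the first uses the $c_0$/$\ell_\infty$ structure of $Y$ together with $\eps<1$ to kill every coordinate of $Sx$ except the $j$-th, and the second uses the $\ell_1$ structure of $X$ to transfer norm attainment from $x$ to its $i$-th block; once these are secured, everything else is routine bookkeeping with norm-one maps.
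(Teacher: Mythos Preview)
Your proof is correct and follows essentially the same route as the paper: lift $(T_0,x_0)$ to $(X,Y)$ via the canonical inclusions and projections, apply the BPBp there, then push the resulting pair back down with $\Pi_j(\cdot)Q_i$. The only minor difference is in the endgame: the paper uses the sharper estimate $\|\Pi_jSQ_k\|<\eps$ for $k\neq i$ (coming from $\widetilde{T}_0Q_k=0$) inside the pinching inequality, which forces $x_k=0$ for all $k\neq i$ and hence $\|x_i\|=1$ outright, so no normalization is needed; you instead bound $\|\Pi_jSQ_k\|$ by $1$ and then normalize $x_i$. Your variant is fine because the quantity $(1-\|x_i\|)+\|x_i-x_0\|$ that bounds $\|y-x_0\|$ is \emph{exactly} $\|\widetilde{x}_0-x\|_{\ell_1}<\eps$, not merely the sum of two terms each below $\eps$ --- you might want to make that identification explicit, since the way you stated the two inequalities separately could momentarily suggest a $2\eps$ bound.
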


We observe that this result appeared in
\cite[Proposition~2.4]{Choi-Kim-JFA}, but the authors were not
interested in controlling the function $\eta(\eps)$, which will be
of relevance to us in this paper.

\begin{proof}[Proof of Proposition~\ref{fam}]
Let $E_i$ and $F_j$ denote the natural isometric embeddings of $X_i$
and $Y_j$ into $X$ and $Y$, respectively, and let $P_i$ and $Q_j$
denote the natural norm-one projections from $X$ and $Y$ onto $X_i$
and $Y_j$, respectively. For $T\in L(X,Y)$, we can easily see that
$$
\|T\|=\sup\{\|Q_jT\| \,:\, j\in J\}=\sup\{\|TE_i\| \,:\, i \in I\}.
$$
(A proof of this fact can be found in \cite[Lemma~2]{PaySal}, for
instance.) Hence
$$
\|T\|=\sup\{\|Q_jTE_i\| : i\in I,~ j\in J\}  .
$$

Fix $h\in I$ and $k\in J$. To show that the pair $(X_h, Y_k)$
satisfies the BPBp with function $\eta(\eps)$, suppose that
$\|T(x_h)\|>1-\eta(\eps)$ for $T\in S_{L(X_h, Y_k)}$ and $x_h\in
S_{X_h}$. Consider the linear operator $\widetilde{T}=F_kTP_h \in
L(X,Y)$. Note that $Q_j\widetilde{T}=0$ for $ j\neq k$ and
$Q_k\widetilde{T}E_i=0$ for $i\neq h$, while
$Q_k\widetilde{T}E_h=T$, $\|\widetilde{T}\|=\|T\|=1$ and
$\|\widetilde{T}(E_h x_h)\| >1-\eta(\eps)$.

Since the pair $(X,Y)$ has the BPBp, there exists
$(x_0,\widetilde{S})\in \Pi(X,Y)$ such that
$$
\|\widetilde{T}-\widetilde{S}\|<\eps \quad \text{ and } \quad
\|x_0-E_h(x_h)\|<\eps.
$$
Let $S=Q_k\widetilde{S}E_h\in L(X_h,Y_k)$. Clearly $\|S\|\leq 1$ and
$\|S-T\|\leq\|\widetilde{S}-\widetilde{T}\|<\eps$. Now we want to
show that $S$ attains its norm at $P_h(x_0)$ and that
$\|P_h(x_0)\|=1$. Indeed, for $j\in J,~ j\neq k $ one has
$$
\|Q_j\widetilde{S}x_0\|=\|Q_j\widetilde{S}x_0-Q_j\widetilde{T}x_0\|
\leq\|\widetilde{S} - \widetilde{T}\|<\eps<1.
$$
Hence $\|\widetilde{S}x_0\|=1 =\|Q_k\widetilde{S}x_0\|$, which shows
that $\|Q_k\widetilde{S}\|=1$ and $Q_k\widetilde{S}$ attains its
norm at $x_0$. Similarly, for $i\in I $, $i\neq h$ we have
$$
\|Q_k\widetilde{S}E_i\|=\|Q_k\widetilde{S}E_i-Q_k\widetilde{T}E_i\|
\leq\|\widetilde{S}-\widetilde{T}\|<\eps<1,
$$
and so $\|Q_k\widetilde{S}\|= 1 =\|Q_k\widetilde{S}E_h\|=\|S\|$.
Since
\begin{eqnarray*} 1 &= &
\|Q_k\widetilde{S}x_0\|\leq \sum_{i\in I}\|Q_k\widetilde{S}E_iP_ix_0\|= \|SP_hx_0\|+\sum_{i\in I, i\neq h}\|Q_k\widetilde{S}E_iP_ix_0\|\\
& \leq &\|P_hx_0\|+\eps\sum_{i\in I, i\neq
h}\|P_ix_0\|\leq\|P_hx_0\|+\sum_{i\in I, i\neq h}\|P_ix_0\|=1,
\end{eqnarray*}
we have $\|P_ix_0\|=0$ for $i \neq h$ and
$\|S(P_hx_0)\|=\|P_hx_0\|=1$. Further,
\begin{equation*}
\|P_hx_0-x_h\|=\|P_h(x_0-E_h(x_h))\|\leq\|x_0-E_h(x_h)\|<\eps.\qedhere
\end{equation*}
\end{proof}

The next proposition gives when the converse result is possible, but
only for range spaces.

\begin{proposition}\label{prop-c_0sumUniversalFunction}
Let $X$ be a Banach space and let $\{Y_j\, :\, j\in J\}$ be a family
of Banach spaces. Then, for both $Y=\big[\bigoplus_{j\in
J}Y_j\big]_{c_0}$ and $Y=\big[\bigoplus_{j\in
J}Y_j\big]_{\ell_{\infty}}$, one has
$$
\eta(X,Y) = \inf\limits_{j\in J} \eta(X,Y_j).
$$
Consequently, the following four conditions are equivalent:
\begin{enumerate}
\item[(i)] $\inf\limits_{j\in J} \eta(X,Y_j)(\eps)>0$ for all $\eps\in(0,1)$,
\item[(ii)] every pair $(X, Y_j)$ has the BPBp with a common function $\eta(\eps)>0$,
\item[(iii)] the pair $\left(X, \big[\bigoplus_{j\in J}Y_j\big]_{\ell_{\infty}}\right)$ has the BPBp,
\item[(iv)] the pair $\left(X, \big[\bigoplus_{j\in J}Y_j\big]_{c_0}\right)$ has the BPBp.
\end{enumerate}
\end{proposition}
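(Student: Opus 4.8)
The statement has two parts: the displayed equality $\eta(X,Y)=\inf_j\eta(X,Y_j)$ for $Y$ either of the two sums, and the four‑fold equivalence, which will be a formal consequence of the equality. The inequality $\eta(X,Y)\le\eta(X,Y_j)$ for each fixed $j$ is immediate from Proposition~\ref{fam} applied with the one‑element index family $\{X\}$ on the domain side (so that $X$ is trivially its own $\ell_1$‑sum): if $(X,Y)$ has the BPBp with some function, that same function works for $(X,Y_j)$, and $\eta(X,Y_j)$ is by construction the best function for $(X,Y_j)$. Taking the infimum over $j$ gives $\eta(X,Y)\le\inf_j\eta(X,Y_j)$, so the real content is the reverse inequality.

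For the reverse inequality the plan is to fix $\eps\in(0,1)$ and $\rho<\inf_j\eta(X,Y_j)(\eps)$ and to verify the ``$\rho$‑requirement'' in the definition of $\eta(X,Y)(\eps)$. So let $(T_0,x_0)\in\mathcal{S}(X,Y)(\rho)$, i.e.\ $\|T_0\|=\|x_0\|=1$ and $\|T_0x_0\|>1-\rho$, and let $Q_j$ denote the canonical projection of $Y$ onto $Y_j$. The structural fact about $c_0$‑ and $\ell_\infty$‑sums, already used in the proof of Proposition~\ref{fam} (it is the lemma of \cite{PaySal}), is that $\|T_0x_0\|=\sup_j\|Q_jT_0x_0\|$ and $\|T\|=\sup_j\|Q_jT\|$ for every $T\in L(X,Y)$. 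Choose $j_0$ realizing (or nearly realizing, in the $\ell_\infty$ case) the supremum, so that $\|Q_{j_0}T_0x_0\|>1-\rho$, and set $c=\|Q_{j_0}T_0\|$, so $1-\rho<c\le 1$. The normalized operator $R_0=Q_{j_0}T_0/c\in L(X,Y_{j_0})$ satisfies $\|R_0\|=1$ and $\|R_0x_0\|=\|Q_{j_0}T_0x_0\|/c>1-\rho$, so $(R_0,x_0)\in\mathcal{S}(X,Y_{j_0})(\rho)$; since $\rho<\eta(X,Y_{j_0})(\eps)$, the BPBp for $(X,Y_{j_0})$ yields $(x,R)\in\Pi(X,Y_{j_0})$ with $\|x-x_0\|<\eps$ and $R$ close to $R_0$. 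Now lift $R$ back to $Y$: let $T\in L(X,Y)$ have $j_0$‑th coordinate $R$ and $j$‑th coordinate $Q_jT_0$ for $j\neq j_0$. Then $\|Q_{j_0}T\|=\|R\|=1$ and $\|Q_jT\|=\|Q_jT_0\|\le\|T_0\|=1$ for $j\neq j_0$, so $\|T\|=\sup_j\|Q_jT\|=1$; and $\|Tx\|\ge\|Q_{j_0}Tx\|=\|Rx\|=1$, forcing $\|Tx\|=1$. Thus $(T,x)\in\Pi(X,Y)$, $\|x-x_0\|<\eps$, and, since $T$ and $T_0$ differ only in coordinate $j_0$, $\|T-T_0\|=\|R-Q_{j_0}T_0\|$.

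The delicate step — and where I expect the main obstacle to be — is bounding $\|R-Q_{j_0}T_0\|$ by $\eps$. The BPBp for $Y_{j_0}$ only places $R$ near $R_0=Q_{j_0}T_0/c$, not near $Q_{j_0}T_0$ itself, so the triangle inequality gives merely $\|R-Q_{j_0}T_0\|\le\|R-R_0\|+(1-c)$, and $1-c$ can a priori be almost $\rho$. The resolution is that $1-c$ and $1-\|R_0x_0\|=1-\|Q_{j_0}T_0x_0\|/c$ cannot both be large: the closer $c$ is to $1-\rho$, the closer the quotient $\|Q_{j_0}T_0x_0\|/c$ is to $1$, and the BPBp application for $(X,Y_{j_0})$ — whose ``challenge'' $(R_0,x_0)$ lies in $\mathcal{S}(X,Y_{j_0})\bigl(1-\|R_0x_0\|\bigr)$ — can then be carried out with an accuracy parameter that is smaller than $\eps$ by (essentially) $1-c$. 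Managing this trade‑off carefully (choosing $j_0$ at the maximum of $\|Q_jT_0x_0\|$ keeps the estimates cleanest) yields $\|R-Q_{j_0}T_0\|<\eps$, completing the $\rho$‑requirement and hence $\eta(X,Y)(\eps)\ge\inf_j\eta(X,Y_j)(\eps)$.

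Finally, granting the displayed equality, the equivalence of (i)–(iv) is routine. (i)$\Leftrightarrow$(ii): the function $\eps\mapsto\inf_j\eta(X,Y_j)(\eps)$ is an admissible common function for all the pairs $(X,Y_j)$ (each $(X,Y_j)$ has the BPBp with its best function $\eta(X,Y_j)\ge\inf_j\eta(X,Y_j)$), and conversely any positive common function $\eta(\eps)$ satisfies $\eta(X,Y_j)(\eps)\ge\eta(\eps)$ for all $j$, whence $\inf_j\eta(X,Y_j)(\eps)\ge\eta(\eps)>0$. (i)$\Leftrightarrow$(iii)$\Leftrightarrow$(iv): a pair $(X,Z)$ has the BPBp exactly when $\eta(X,Z)(\eps)>0$ for every $\eps\in(0,1)$, and by the displayed equality $\eta\bigl(X,[\bigoplus_{j}Y_j]_{\ell_\infty}\bigr)(\eps)=\eta\bigl(X,[\bigoplus_{j}Y_j]_{c_0}\bigr)(\eps)=\inf_j\eta(X,Y_j)(\eps)$, so conditions (iii) and (iv) each say precisely that this infimum is positive for all $\eps$, i.e.\ condition (i).
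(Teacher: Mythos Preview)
Your overall architecture matches the paper's: $\leq$ comes from Proposition~\ref{fam}, and for $\geq$ one picks $j_0$ with $\|Q_{j_0}T x_0\|>1-\eta(\eps)$, applies the BPBp in that coordinate, and lifts the resulting operator back to $Y$ by keeping all other coordinates of $T$ unchanged. The derivation of (i)--(iv) from the displayed equality is also the same.

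Where you diverge is exactly at your ``delicate step''. The paper does \emph{not} normalise: from $\|Q_{j_0}T x_0\|>1-\eta(\eps)$ and $\eta(X,Y_{j_0})(\eps)\geq\eta(\eps)$ it invokes the BPBp for $(X,Y_{j_0})$ directly on the pair $(Q_{j_0}T,x_0)$, obtaining $(S_{j_0},u)\in\Pi(X,Y_{j_0})$ with $\|S_{j_0}-Q_{j_0}T\|<\eps$ and $\|u-x_0\|<\eps$; the lifted $S$ then satisfies $\|S-T\|=\|S_{j_0}-Q_{j_0}T\|<\eps$ at once, and no $(1-c)$ term ever appears. Your normalisation to $R_0=Q_{j_0}T_0/c$ manufactures the very obstacle you then try to remove, and the trade-off sketch does not close it: to run the BPBp at a smaller accuracy $\eps'<\eps-(1-c)$ you would need $\eta(X,Y_{j_0})(\eps')>1-\|R_0x_0\|$, but when $c$ is close to $1$ this asks for $\eta(X,Y_{j_0})(\eps')$ to exceed a value arbitrarily close to $\rho$ at some $\eps'$ \emph{strictly} below $\eps$, and since $\eta(X,Y_{j_0})$ is only known to be monotone (no continuity is available) this can fail. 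You have correctly located the subtle point --- the paper is silent on whether $\|Q_{j_0}T\|=1$ --- but your proposed workaround via normalisation-plus-trade-off does not succeed as written; the paper simply applies the BPBp to $Q_{j_0}T$ directly and moves on.
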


\begin{proof} The inequality $\leq$ follows
from Proposition~\ref{fam}. So let us prove the converse inequality.
To do this, we fix $\eps\in (0,1)$. Write $\eta(\eps)=\inf_{j\in J} \eta(X,Y_j)$ and suppose $\eta(\eps)>0$ (otherwise there is nothing
to prove), so $\eta(X, Y_j)\geq \eta(\eps)>0$. Let $F_j$ and $Q_j$ be as in the proof of
Proposition~\ref{fam}. Suppose that $T \in S_{L(X, Y)}$ and $x_0 \in
S_{X}$ satisfy
$$
\|Tx_0\| > 1-\eta(\eps).
$$
Then there is $j_0\in I$ such that $\|Q_{j_0}Tx_0\|>1-\eta(\eps)$.
By the assumption on $\eps$, there are an operator $S_{j_0} : X \lra
Y_{j_{0}}$ and a vector $u\in S_{X}$ such that
$$
\|S_{j_0}\|=\|S_{j_0}u\|=1,\quad \|S_{j_0}-Q_{j_0}T\|<\eps, \quad
\text{and}\quad \|x_0-u\|<\eps.
$$
Define $S : X \lra Y$ by
$$
S=\sum_{j\neq j_0} F_jQ_j T + F_{j_0}S_{j_0}.
$$
Clearly $\|S\| \leq 1$ and $\|Su\|\geq \|S_{j_0}u\|=1$, so $(u,S)\in
\Pi(X,Y)$. On the other hand,
\begin{equation*}
\|T-S\|=\sup_{j\in J} \|Q_j(T-S)\|=\|Q_{j_0}T - S_{j_0}\|<\eps.
\end{equation*}
We have proved that $(X,Y)$ has the BPBp with the function
$\eta(\eps)$. In other words, $\eta(X,Y)(\eps)\geq \eta(\eps)$.
\end{proof}

One particular case of
Proposition~\ref{prop-c_0sumUniversalFunction} is that the BPBp is
stable for finite $\ell_\infty$-sums of range spaces.

\begin{corollary}\label{cor-finite-ellinftysums}
Let $X$, $Y_1,\ldots,Y_m$ be Banach spaces and write
$Y=Y_1\oplus_\infty \cdots \oplus_\infty Y_m$. Then, $(X,Y)$ has the
BPBp if and only if $(X,Y_j)$ has the BPBp for every $j=1,\ldots,m$.
As a consequence, $Y_1,\ldots,Y_m$ are universal BPB range spaces if
and only if $Y$ is.
\end{corollary}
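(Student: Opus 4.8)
The plan is to deduce this directly from Proposition~\ref{prop-c_0sumUniversalFunction}, specialized to a finite index set. I would take $J=\{1,\dots,m\}$ and invoke the $\ell_\infty$-sum case of that proposition, which yields the identity $\eta(X,Y)=\inf_{1\leq j\leq m}\eta(X,Y_j)$ together with the equivalence of its conditions (i)--(iv). The first assertion then follows once I observe that, for a \emph{finite} family, condition (ii) of that proposition — that every pair $(X,Y_j)$ has the BPBp with a \emph{common} positive function $\eta(\eps)$ — is the same as the conjunction ``$(X,Y_j)$ has the BPBp for every $j=1,\dots,m$''. Indeed, if each $(X,Y_j)$ has the BPBp with some function $\eps\longmapsto\eta_j(\eps)$, then $\eps\longmapsto\min_{1\leq j\leq m}\eta_j(\eps)$ is a strictly positive function that works for all $m$ pairs simultaneously; the reverse implication is trivial. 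Thus $(X,Y)$ has the BPBp $\iff$ condition (iv) (equivalently (iii)) holds $\iff$ condition (ii) holds $\iff$ $(X,Y_j)$ has the BPBp for each $j$.

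For the ``consequence'', I would simply quantify over the domain space. By definition $Y$ is a universal BPB range space exactly when $(X,Y)$ has the BPBp for every Banach space $X$; by the first part of the corollary this happens if and only if $(X,Y_j)$ has the BPBp for every Banach space $X$ and every $j\in\{1,\dots,m\}$, and since both quantifiers are universal they commute, so this is precisely the statement that each $Y_j$ is a universal BPB range space.

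I do not expect any real obstacle here: essentially all the work is already contained in Proposition~\ref{prop-c_0sumUniversalFunction}. The only point worth isolating is the step where finiteness of the index set is used, namely that a minimum of finitely many strictly positive functions is again strictly positive; this is exactly what fails for infinite families and is the source of the counterexamples to the analogous statement for infinite $c_0$- and $\ell_\infty$-sums obtained later in the paper.
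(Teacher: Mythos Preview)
Your proposal is correct and matches the paper's own treatment exactly: the paper presents this corollary as an immediate particular case of Proposition~\ref{prop-c_0sumUniversalFunction} without further argument, and your write-up supplies precisely the one detail worth making explicit, namely that for a finite index set the minimum $\min_{1\leq j\leq m}\eta_j(\eps)$ of the individual BPBp functions is again strictly positive, so conditions (i)--(ii) of that proposition are automatic once each $(X,Y_j)$ has the BPBp.
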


Note that since $\ell_1$ is not a universal BPB domain
space, we cannot expect an analogue of
Proposition~\ref{prop-c_0sumUniversalFunction} for $\ell_1$-sums of
domain spaces. Indeed, even
Corollary~\ref{cor-finite-ellinftysums} has no counterpart for
finite $\ell_1$-sums of domain spaces. This follows from the
fact that $\ell_1^2$ fails to be a universal BPB domain space
(Corollary~\ref{cor:ell12-strictlyconvex} below or
\cite[Corollary~9]{KimLee}).

The second result deals with $c_0$- or $\ell_\infty$-sums of domain
spaces.

\begin{proposition}\label{fam2}
Let $\{X_i\, :\, i \in I\}$ be a family of Banach spaces,
$X=\big[\bigoplus_{i\in I}X_i\big]_{c_0}$ or $X=\big[\bigoplus_{i\in
I}X_i\big]_{\ell_\infty}$, and let $Y$ be a Banach space. If the
pair $(X,Y)$ has the BPBp with $\eta(\eps)$, then the pair $(X_i,Y)$ also has the BPBp with $\eta(\eps)$ for every $i\in I$. In other words, $\eta(X,Y)\leq \eta(X_i,Y)$ for every $i\in I$.
\end{proposition}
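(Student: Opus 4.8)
The plan is to mimic the structure of the proof of Proposition~\ref{fam}, but now the domain is a $c_0$- or $\ell_\infty$-sum rather than an $\ell_1$-sum, so the roles of the embeddings and projections on the domain side change. Fix $h\in I$ and let $E_h:X_h\lra X$ be the natural isometric embedding and $P_h:X\lra X_h$ the natural norm-one projection (so $P_hE_h=\Id_{X_h}$). Given $T\in S_{L(X_h,Y)}$ and $x_h\in S_{X_h}$ with $\|T(x_h)\|>1-\eta(\eps)$, the natural candidate to lift $T$ to $L(X,Y)$ is $\widetilde T:=T P_h$. One checks immediately that $\|\widetilde T\|=\|T\|=1$ (using $\|P_h\|=1$ and $\widetilde T E_h=T$) and that $\|\widetilde T(E_h x_h)\|=\|T x_h\|>1-\eta(\eps)$. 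Also $\|E_h x_h\|=1$. So $(\,\widetilde T,\,E_h x_h\,)$ is an admissible starting point for the BPBp of $(X,Y)$.

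Applying the BPBp hypothesis for $(X,Y)$ with function $\eta(\eps)$, I obtain $(\,\widetilde S,\,x_0\,)\in\Pi(X,Y)$ with $\|\widetilde T-\widetilde S\|<\eps$ and $\|x_0-E_h x_h\|<\eps$. I then set $S:=\widetilde S E_h\in L(X_h,Y)$. The estimate $\|S-T\|=\|\widetilde S E_h-\widetilde T E_h\|\leq\|\widetilde S-\widetilde T\|<\eps$ is immediate, as is $\|S\|\leq 1$. It remains to see that $S$ attains its norm at a point of $S_{X_h}$ close to $x_h$. The natural candidate is $x:=P_h x_0$; note $\|x-x_h\|=\|P_h(x_0-E_h x_h)\|\leq\|x_0-E_h x_h\|<\eps$, so the domain-approximation inequality is automatic. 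The genuine content is to show $\|x\|=1$ and $\|Sx\|=1$.

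The main obstacle is exactly this last point: unlike the $\ell_1$-sum case, here $x_0\in X$ need not be (nearly) supported on the $h$-th coordinate, so we cannot conclude $\|Sx\|$ is large just from $\|\widetilde S x_0\|=1$. This is where the $c_0$/$\ell_\infty$ structure of the domain must be used through the \emph{minimal} operator $\widetilde T=TP_h$: because $\widetilde T$ factors through $P_h$, so does $\widetilde S$ ``up to $\eps$'' in a useful sense — more precisely, I would argue that $\widetilde S$ must essentially see only the $h$-coordinate of its norm-attaining vector. Concretely, writing $x_0=(x_0^i)_{i\in I}$, for $i\neq h$ one has $\widetilde T E_i=0$, hence $\|\widetilde S E_i\|=\|\widetilde S E_i-\widetilde T E_i\|\leq\|\widetilde S-\widetilde T\|<\eps$. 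Then, using that $\widetilde S$ attains its norm $1$ at $x_0$ and splitting $x_0$ along coordinates (in the $c_0$-sum, $x_0$ is a norm-limit of finitely-supported vectors, and $\|x_0\|=\sup_i\|x_0^i\|=1$), one estimates
\[
1=\|\widetilde S x_0\|\leq \|\widetilde S E_h x_0^h\| + \Bigl\|\widetilde S\bigl(x_0-E_h x_0^h\bigr)\Bigr\|
\leq \|S x_0^h\| + \|\widetilde S E_h x_0^h - \widetilde S x_0\|.
\]
Bounding the last term is the delicate step; one shows $\|\widetilde S(x_0-E_h x_0^h)\|$ is controlled by $\eps$ via the bound $\|\widetilde S E_i\|<\eps$ on the other coordinates together with $\|x_0-E_h x_0^h\|\leq 1$, forcing $\|S x_0^h\|$ to be close to $1$; combined with $\|x_0^h\|\leq\|x_0\|=1$ this pins down $\|x_0^h\|=1$ and $\|S x_0^h\|=1$, so $x:=x_0^h=P_h x_0\in S_{X_h}$ and $S$ attains its norm there. (One may need to pass to finitely-supported approximations of $x_0$ first and then take a limit, or invoke directly that the norm of $x_0$ in the $c_0$- or $\ell_\infty$-sum is the supremum of the coordinate norms.) This yields $(S,x)\in\Pi(X_h,Y)$ with $\|S-T\|<\eps$ and $\|x-x_h\|<\eps$, proving $\eta(X,Y)(\eps)\leq\eta(X_h,Y)(\eps)$; since $h$ and $\eps$ were arbitrary, the proposition follows.
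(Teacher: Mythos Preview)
Your setup is correct and matches the paper's: lift $T$ to $\widetilde T = TP_h$, apply the BPBp of $(X,Y)$ to obtain $(\widetilde S, x_0)\in\Pi(X,Y)$, and put $S = \widetilde S E_h$ and $x = P_h x_0$. The estimates $\|S - T\| < \eps$ and $\|P_h x_0 - x_h\| < \eps$ follow as you say, and one can indeed conclude $\|P_h x_0\| = 1$ from the $\ell_\infty$-structure: since $\|x_0 - E_h x_h\| < \eps$ in the sup-norm, every coordinate $i \neq h$ satisfies $\|x_0^i\| < \eps < 1$, and then $\|x_0\| = 1$ forces $\|x_0^h\| = 1$. (Your stated reason for this last fact, via $\|S x_0^h\|$ being close to $1$, is not the right one, but the conclusion happens to be correct.)

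The genuine gap is the claim $\|S(P_h x_0)\| = 1$. Your triangle-inequality route
\[
1 = \|\widetilde S x_0\| \leq \|S x_0^h\| + \bigl\|\widetilde S(x_0 - E_h x_0^h)\bigr\|,
\]
together with any bound of the form $\|\widetilde S(x_0 - E_h x_0^h)\| < \eps$ (indeed the best available, since $P_h$ kills $x_0 - E_h x_0^h$ and hence so does $\widetilde T$), yields only $\|S x_0^h\| > 1 - \eps$. ``Close to $1$'' does not ``pin down'' equality; the BPBp demands \emph{exact} norm attainment, and nothing in your argument upgrades the strict inequality. The coordinate-wise bounds $\|\widetilde S E_i\| < \eps$ for $i \neq h$ do not help either: in a $c_0$- or $\ell_\infty$-sum there is no useful way to sum them.

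The paper closes this gap with a convexity trick (after first reducing to the two-summand case $X = X_h \oplus_\infty Z$, which is harmless). Writing $z = x_0 - E_h x_0^h$, one has $\|z\| < \eps$ by the observation above (you only record $\|z\|\leq 1$, which is not sharp enough), so $E_h x_0^h + \eps^{-1}z \in B_X$. Then
\[
x_0 \,=\, (1-\eps)\,E_h x_0^h \,+\, \eps\,\bigl(E_h x_0^h + \eps^{-1}z\bigr)
\]
is a convex combination of two points of $B_X$. Applying $\widetilde S$ gives
\[
1 = \|\widetilde S x_0\| \leq (1-\eps)\,\|\widetilde S E_h x_0^h\| + \eps\,\bigl\|\widetilde S(E_h x_0^h + \eps^{-1}z)\bigr\| \leq (1-\eps) + \eps = 1,
\]
so equality holds throughout, forcing $\|\widetilde S E_h x_0^h\| = 1$, i.e.\ $\|S(P_h x_0)\| = 1$. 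This extremality argument, not a triangle-inequality estimate, is the missing ingredient in your proposal.
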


\begin{proof} For every $i\in I$ we may write $X=X_i\oplus_\infty Z$ for a suitable Banach space $Z.$ Thus, without loss of generality, we may assume that $X=X_1\oplus_\infty X_2$.

For fixed $\eps\in (0,1),$ suppose that $\|T(x_0)\|>1-\eta(\eps)$
for some $T\in L(X_1, Y)$ with $\|T\|=1$ and some $x_0\in S_{X_1}$.
Define a linear operator $\widetilde{T}\in L(X,Y)$ by
$$
\widetilde{T}(x_1,x_2)=Tx_1 \qquad \bigl((x_1,x_2)\in X\bigr).
$$
Observe that $\|\widetilde{T}\|=1$ and $\|\widetilde{T}(x_0,0)\|
>1- \eta(\eps)$. Since the pair $(X,Y)$ has the BPBp, there exist $\widetilde{S}\in L(X,Y)$
with $\|\widetilde{S}\|=1$ and $(x_1',x_2')\in S_X$ such that
$$
\|\widetilde{S}\|=\|\widetilde{S}(x_1',x_2')\|=1,\quad  \|\widetilde{T}-\widetilde{S}\|<\eps\quad \text{ and} \quad \|(x_1',x_2')-(x_0,0)\|<\eps.
$$
From the last inequality, we see that
\begin{equation}\label{eq-linfty-domain-1}
\|x_1'-x_0\|<\eps, \quad  \|x_2'\|<\eps<1\quad \text{ and} \quad \|x_1'\|=1.
\end{equation}
If we define $S\in L(X_1,Y)$ by $S(x_1)=\widetilde{S}(x_1,0)$, then  $\|S\|\leq \|\widetilde{S}\|=1$
and $\|S-T\|\leq \|\widetilde{T}-\widetilde{S}\|<\eps$. So, by \eqref{eq-linfty-domain-1},
it suffices to show that  $\|S(x_1')\|=1$. Indeed, using \eqref{eq-linfty-domain-1} again,
we have that $\|\eps^{-1} x_2'\|\leq 1$, so $(x_1',\eps^{-1} x_2')\in B_X$. We can write
$$
\widetilde{S}(x_1',x_2')=(1-\eps)\widetilde{S}(x_1',0) + \eps \widetilde{S}(x_1',\eps^{-1}x_2')
$$
and, since $\|\widetilde{S}(x_1',x_2')\|=1$, we get
$\|\widetilde{S}(x_1',0)\|=1$. This is exactly $\|S(x_1')\|=1$, as desired.
\end{proof}

Considered as a real Banach space, $\ell_\infty^2$ is not a universal BPB domain space (it is isometric to $\ell_1^2$).
As a consequence, we cannot expect the converse of Proposition~\ref{fam2} to be true, even for finite sums.

The third proposition deals with the remaining case of $\ell_1$-sums of range spaces.

\begin{proposition}\label{fam3}
Let $\{Y_j\, :\, j \in J\}$ be a family of Banach spaces, $Y=\big[\bigoplus_{j\in J}Y_j\big]_{\ell_1}$, and
let $X$ be a Banach space. If the pair $(X,Y)$ has the BPBp with $\eta(\eps)$, then the pair $(X,Y_j)$ also has the BPBp with $\eta(\eps)$ for every $j\in J$. In other words, $\eta(X,Y)\leq \eta(X,Y_j)$ for every $j\in J$.
\end{proposition}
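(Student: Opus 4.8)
The plan is to mimic the structure of the proof of Proposition~\ref{fam2}, but now working on the range side with an $\ell_1$-sum. Fix $k\in J$ and fix $\eps\in(0,1)$. Suppose $T\in L(X,Y_k)$ with $\|T\|=1$ and $x_0\in S_X$ satisfy $\|Tx_0\|>1-\eta(\eps)$. As in the previous proofs, let $F_j\colon Y_j\to Y$ and $Q_j\colon Y\to Y_j$ denote the natural isometric embeddings and norm-one projections, so that for an operator into an $\ell_1$-sum we have $\|R\|=\sup_j\|Q_jR\|$ for $R\in L(X,Y)$. First I would lift $T$ to $\widetilde T:=F_kT\in L(X,Y)$, which satisfies $\|\widetilde T\|=1$ and $\|\widetilde T x_0\|=\|Tx_0\|>1-\eta(\eps)$. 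Since $(X,Y)$ has the BPBp with $\eta(\eps)$, there is a pair $(u,\widetilde S)\in\Pi(X,Y)$ with $\|\widetilde T-\widetilde S\|<\eps$ and $\|x_0-u\|<\eps$.

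The next step is to produce from $\widetilde S$ an operator $S\in L(X,Y_k)$ that attains its norm at $u$. The natural candidate is $S:=Q_k\widetilde S$. Immediately $\|S\|\le\|\widetilde S\|=1$ and $\|S-T\|=\|Q_k(\widetilde S-\widetilde T)\|\le\|\widetilde S-\widetilde T\|<\eps$. So the whole proof reduces to showing $\|S u\|=\|Q_k\widetilde S u\|=1$; then automatically $\|S\|=1$ as well, and $(u,S)$ witnesses the BPBp for $(X,Y_k)$ at $(T,x_0)$ with the same $\eps$.

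To see $\|Q_k\widetilde S u\|=1$: since $\widetilde S u\in S_Y$ we have $\sum_{j\in J}\|Q_j\widetilde S u\|=1$. For $j\ne k$, compare with $\widetilde T$: because $Q_j\widetilde T=Q_jF_kT=0$, we get $\|Q_j\widetilde S u\|=\|Q_j(\widetilde S-\widetilde T)u\|\le\|\widetilde S-\widetilde T\|<\eps$. This bounds each individual tail coordinate by $\eps$, but $J$ may be infinite, so I cannot simply sum these estimates to conclude the tail is small; that is the one point where a little care is needed. The clean way around it is: for $j\ne k$ write $\|Q_j\widetilde S u\|=\|Q_j\widetilde S\|\cdot\tfrac{\|Q_j\widetilde S u\|}{\|Q_j\widetilde S\|}$ — no, better to argue directly with the operator norm. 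Since $\|Q_j\widetilde S u\|\le\eps<1=\|\widetilde S u\|$ shows $u$ is not where the mass of $Q_j\widetilde S$ concentrates, consider instead $\|Q_j\widetilde S\|=\|Q_j(\widetilde S-\widetilde T)\|\le\|\widetilde S-\widetilde T\|<\eps$ for all $j\ne k$ (again using $Q_j\widetilde T=0$). Hence $\|Q_k\widetilde S\|\ge\|\widetilde S\|-\sup_{j\ne k}\|Q_j\widetilde S\|$ is not quite it either; rather, from $\|\widetilde S\|=\sup_j\|Q_j\widetilde S\|=1$ and $\|Q_j\widetilde S\|<\eps<1$ for $j\ne k$, we conclude $\|Q_k\widetilde S\|=1$. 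Then, using $1=\|\widetilde S u\|=\sum_j\|Q_j\widetilde S u\|\le\|Q_k\widetilde S u\|+\sum_{j\ne k}\|Q_j\widetilde S u\|$, and the dual/summability structure of the $\ell_1$-sum, I get $\|Q_k\widetilde S u\|\ge 1-\sum_{j\ne k}\|Q_j\widetilde S u\|$; but $\sum_{j\ne k}\|Q_j\widetilde S u\| = 1-\|Q_k\widetilde S u\|$, so this is a tautology. The honest resolution is that I do not need the individual coordinate bounds at all: I only need that $\widetilde T$ has all coordinates but the $k$-th equal to zero, hence $\widetilde T u\in F_k(Y_k)$, and then a convexity argument — exactly as in Proposition~\ref{fam2}, splitting $\widetilde S u$ along the decomposition $Y=Y_k\oplus_1\big(\bigoplus_{j\ne k}Y_j\big)_{\ell_1}$ and using $\|\widetilde S u\|=1$ together with $\|\widetilde S\|\le 1$ — forces the $(\bigoplus_{j\ne k}Y_j)$-component of $\widetilde S u$ to have norm $0$, i.e. $\|Q_k\widetilde S u\|=1$. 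Concretely: if $v$ denotes the tail component of $\widetilde S u$ with $\|v\|=:\delta>0$, then writing $u$ appropriately and using that $\widetilde S$ is norm-one forces a contradiction with $\|\widetilde S u\|=1$ unless $\delta=0$. I expect the main obstacle to be packaging this last convexity step cleanly when $J$ is infinite; the key idea is that the target direction $\widetilde T u$ living in a single summand is what makes the tail collapse, exactly the $\ell_1$-dual of the $\ell_\infty$-domain argument in Proposition~\ref{fam2}.

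Once $\|Q_k\widetilde S u\|=1$ is established, the proof is complete: $(u,S)\in\Pi(X,Y_k)$, $\|x_0-u\|<\eps$, $\|T-S\|<\eps$, so $(X,Y_k)$ has the BPBp with $\eta(\eps)$, giving $\eta(X,Y)(\eps)\le\eta(X,Y_k)(\eps)$ for every $\eps$, as desired.
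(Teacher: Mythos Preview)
There is a genuine gap: the central claim $\|Q_k\widetilde S u\|=1$ is simply false in general, and none of the attempts you sketch can rescue it. First, the formula $\|R\|=\sup_j\|Q_jR\|$ holds for operators into $c_0$- or $\ell_\infty$-sums, \emph{not} into $\ell_1$-sums; for $Y=\bigl[\bigoplus Y_j\bigr]_{\ell_1}$ one has $\|Rx\|=\sum_j\|Q_jRx\|$ and there is no corresponding ``sup'' description of $\|R\|$. So the step ``$\|\widetilde S\|=\sup_j\|Q_j\widetilde S\|=1$, hence $\|Q_k\widetilde S\|=1$'' is invalid. More decisively, here is a counterexample to the conclusion itself: take $X=\K$, $Y=\K\oplus_1\K$, $T=\Id_\K$, and let $\widetilde S(x)=\bigl((1-\delta)x,\delta x\bigr)$ for small $\delta>0$. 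Then $\|\widetilde S\|=1$, $\widetilde S$ attains its norm at $u=1$, and $\|\widetilde S-\widetilde T\|=2\delta$, so this is a legitimate BPBp output for small $\eps$. But $Q_1\widetilde S(1)=1-\delta$ and $\|Q_1\widetilde S\|=1-\delta<1$; your candidate $S=Q_k\widetilde S$ does not even have norm $1$. The convexity trick from Proposition~\ref{fam2} does not dualize: there one writes a \emph{domain} point as a convex combination of two unit-ball points, which forces a norm-one value on both; on the range side of an $\ell_1$-sum there is no analogous decomposition of $\widetilde S u$ that would kill the tail.

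The paper's proof confronts exactly this obstruction and does \emph{not} project. Writing $\widetilde S=(S_1,S_2)$, it picks a norming functional $y^*=(y_1^*,y_2^*)\in Y^*=Y_1^*\oplus_\infty Y_2^*$ at $\widetilde S x_0$, and then defines
\[
S x \;=\; S_1 x \;+\; y_2^*(S_2 x)\,\frac{S_1 x_0}{\|S_1 x_0\|}\,,
\]
thereby folding the tail mass back into the $k$-th coordinate along a fixed direction. This yields $\|S\|\leq 1$ (since $\|S_1x\|+|y_2^*(S_2x)|\leq\|S_1x\|+\|S_2x\|=\|\widetilde Sx\|$), $\|S x_0\|=1$ (because $\re y_1^*(S_1x_0)=\|S_1x_0\|$ and $\re y_2^*(S_2x_0)=\|S_2x_0\|$), and $\|S-T\|<\eps$ (via $\|S_1x-Tx\|+\|S_2x\|<\eps$). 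That additive correction is the missing idea in your argument.
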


\begin{proof} Arguing as in the previous proof, we may assume that $Y=Y_1\oplus_1 Y_2$.

For $\eps\in (0,1)$ fixed, suppose that $\|T(x_1)\|>1-\eta(\eps)$ for some $T\in L(X, Y_1)$ with $\|T\|=1$
and some $x_1\in S_{X}$. Define a linear operator $\widetilde{T}\in L(X,Y)$ by
$$
\widetilde{T}(x)=(Tx,0) \qquad (x\in X).
$$
Observe that $\|\widetilde{T}\|=1$ and
$\|\widetilde{T}(x_1)\| >1-\eta(\eps)$. Since the pair $(X,Y)$ has the BPBp, there exist $\widetilde{S}\in L(X,Y)$
with $\|\widetilde{S}\|=1$ and $x_0\in S_X$ such that
$$
\|\widetilde{S}\|=\|\widetilde{S}(x_0)\|=1,\quad  \|\widetilde{T}-\widetilde{S}\|<\eps\quad \text{ and} \quad \|x_1-x_0\|<\eps.
$$
Write $\widetilde{S}(x)=(S_1(x),S_2(x))$ for every $x\in X$, where $S_j\in L(X,Y_j)$ for $j=1,2$, and observe that
\begin{equation}\label{eq:l1sumrange-1}
\|\widetilde{S}x - \widetilde{T}x\|=\|S_1 x - Tx\| + \|S_2 x\|<\eps \qquad \bigl(x\in B_X\bigr).
\end{equation}
In particular,
$$
\|S_1 - T\|<\eps \quad \text{ and } \quad \|S_2\|<\eps.
$$
Now, consider $y^*=(y_1^*,y_2^*)\in Y^*\equiv Y_1^*\oplus_\infty Y_2^*$ with $\|y^*\|=1$ such
that $\re y^*(\widetilde{S}(x_0))=1$. 
We have that
$$
1=\re y^*\bigl(\widetilde{S}(x_0)\bigr)=\re y^*_1(S_1 x_0) + \re y_2^*(S_2 x_0) \leq \|S_1 x_0\| + \|S_2 x_0\| = \|\widetilde{S}(x_0)\|=1.
$$
Therefore, we get that
$$
\re y^*_1(S_1 x_0)=\|S_1 x_0\| \quad \text{ and } \quad \re y^*_2(S_2 x_0)=\|S_2 x_0\|.
$$
Finally, we define $S\in L(X,Y_1)$ by
$$
Sx=S_1 x + y_2^*(S_2 x) \frac{S_1 x_0}{\|S_1 x_0\|} \qquad \bigl(x\in X\bigr).
$$
(Observe that $\|S_1 x_0\|=1-\|S_2 x_0\|\geq 1 - \|S_2\|>1-\eps>0$). Then, for $x\in B_X$ we have
$$
\|S x\|\leq \|S_1 x\| + |y_2^*(S_2 x)|\leq \|S_1 x\| + \|S_2 x\|=\|\widetilde{S}(x)\|\leq 1,
$$
so $\|S\|\leq 1$. Furthermore,
\begin{align*}
\|S x_0\| & =\left\| S_1 x_0 + y_2^*(S_2 x_0)
\frac{S_1 x_0}{\|S_1 x_0\|}\right\| \geq
\re y_1^*\left(S_1 x_0 + y_2^*(S_2 x_0)\frac{S_1 x_0}
{\|S_1 x_0\|}\right) \\ & = \re y_1^*(S_1 x_0) + \re y_2^*(S_2 x_0) =1.
\end{align*}
Hence $S$ attains its norm at $x_0$. As $\|x_0-x_1\|<\eps$, it remains to prove that $\|S-T\|<\eps$. Indeed, for $x\in B_X$, we have
$$
\|S x-Tx\|\leq \|S_1 x - T x\| + |y_2^*(S_2x)|\leq \|S_1 x - T x\| + \|S_2 x\|,
$$
so $\|S-T\|<\eps$ by \eqref{eq:l1sumrange-1}.
\end{proof}

As $\ell_1$ does not have Lindenstrauss property B, we cannot expect that a converse of Proposition~\ref{fam3}
can be true in general. In fact, we don't even know whether such a converse is true for finite sums, that is,
whether $(X,Y_1\oplus_1 Y_2)$ has the BPBp whenever $(X,Y_j)$ does for $j=1,2$.

Another result in the same direction is the following generalization of \cite[Theorem~11]{ChoiKimSK} where it is proved for $X=\ell_1$.

\begin{prop}\label{prop-C(K,Y)}
Let $X$ and $Y$ be Banach spaces and let $K$ be a compact Hausdorff space. If $(X,C(K,Y))$ has the BPBp
with a function $\eta(\eps)$, then $(X,Y)$ has the BPBp with the same function $\eta(\eps)$. In other words, $\eta(X,Y)\geq \eta\bigl(X,C(K,Y)\bigr)$.
\end{prop}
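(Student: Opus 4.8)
The plan is to realize $Y$ isometrically as a $1$-complemented subspace of $C(K,Y)$ and to transfer a norm-attaining approximation from the big space back to $Y$. Write $\iota\colon Y\hookrightarrow C(K,Y)$ for the embedding sending $y$ to the constant function $\hat y$ with value $y$; this is an isometric embedding, and evaluation at any fixed $t_0\in K$ gives a norm-one projection $\delta_{t_0}\colon C(K,Y)\to Y$ with $\delta_{t_0}\circ\iota=\Id_Y$. Given $T\in L(X,Y)$ with $\|T\|=1$ and $x_0\in S_X$ with $\|Tx_0\|>1-\eta(\eps)$, define $\widetilde T=\iota\circ T\in L\bigl(X,C(K,Y)\bigr)$; then $\|\widetilde T\|=1$ and $\|\widetilde T x_0\|=\|T x_0\|>1-\eta(\eps)$. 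By hypothesis there are $\widetilde S\in L\bigl(X,C(K,Y)\bigr)$ and $x\in S_X$ with $1=\|\widetilde S\|=\|\widetilde S x\|$, $\|x_0-x\|<\eps$ and $\|\widetilde T-\widetilde S\|<\eps$.

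The natural candidate is $S=\delta_{t_0}\circ\widetilde S$ for a suitably chosen $t_0$. We have $\|S\|\le 1$ and $\|S-T\|=\|\delta_{t_0}\widetilde S-\delta_{t_0}\widetilde T\|\le\|\widetilde S-\widetilde T\|<\eps$, so the only thing to arrange is $\|S x\|=1$, i.e.\ that $t_0$ can be chosen so that the norm of the function $\widetilde S x\in C(K,Y)$ is attained at $t_0$. Since $K$ is compact and $t\mapsto \|(\widetilde S x)(t)\|$ is continuous, it attains its maximum, which equals $\|\widetilde S x\|=1$, at some point $t_0\in K$; fix that $t_0$. Then $\|S x\|=\|(\widetilde S x)(t_0)\|=1$, hence $\|S\|=1$ and $S$ attains its norm at $x$. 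Together with $\|x_0-x\|<\eps$ and $\|S-T\|<\eps$ this shows $(X,Y)$ has the BPBp with the same function $\eta(\eps)$, which is the asserted inequality $\eta(X,Y)\ge\eta\bigl(X,C(K,Y)\bigr)$.

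The argument is essentially routine; the one point that needs care is the \emph{order of quantifiers}: $t_0$ must be chosen \emph{after} $\widetilde S$ and $x$ are produced, since it depends on the function $\widetilde S x$. There is no uniformity issue because a single $t_0$ (depending on the given data $T,x_0$) suffices for each instance of the BPBp. One should also note that compactness of $K$ is genuinely used to guarantee that the supremum defining $\|\widetilde S x\|$ is attained; for a general locally compact $K$ with $C_0(K,Y)$ in place of $C(K,Y)$ this step would need an additional perturbation argument, but that is not needed here.
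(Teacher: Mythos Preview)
Your proof is correct and follows essentially the same route as the paper: lift $T$ to $\widetilde T=\iota\circ T$ via constant functions, apply the BPBp in $C(K,Y)$ to obtain $\widetilde S$ and $x$, choose $t_0\in K$ where $\|(\widetilde S x)(t_0)\|=1$ by compactness, and set $S=\delta_{t_0}\circ\widetilde S$. Your remarks on the order of quantifiers and the genuine use of compactness are accurate and add clarity beyond the paper's presentation.
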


\begin{proof}
Given $\eps>0$, consider $T\in S_{L(X,Y)}$ and $x_0\in S_X$ satisfying
$$
\|Tx_0\|>1-\eta(\eps).
$$
The bounded linear operator $\widetilde{T}:X\longrightarrow C(K,Y)$, defined by $[\widetilde{T}(x)](t) = T(x)$
for all $x\in X$ and $t\in K$, satisfies $\|\widetilde{T} x_0\|= \|T x_0\|>1-\eta(\eps)$.
By the assumption, there exist $x_1\in S_X$ and $\widetilde{S}\in L(X, C(K,Y))$ such that
$$
\|\widetilde{S}\|=\|\widetilde{S}(x_1)\|=1,\quad \|\widetilde{T} - \widetilde{S}\|<\eps, \quad \|x_0 - x_1\|<\eps.
$$
Moreover, there is $t_1 \in K$ such that $1=\|\widetilde{S}(x_1)\|=\bigl\|[\widetilde{S}(x_1)](t_1)\bigr\|$.
We can now see that the bounded linear operator $S:X\longrightarrow Y$ defined by $S(x) = [\widetilde{S}(x)](t_1)$ for all $x\in X$, satisfies
that
$$
\|S\|= \sup_{x\in B_X} \bigl\|[\widetilde{S}(x)](t_1)\bigr\| = \bigl\|[\widetilde{S}(x_1)](t_1)\bigr\|=\|S(x_1)\|=1
$$
and
\begin{align*}
\|S-T\| &= \sup_{x\in B_X} \|S(x) - T(x)\| = \sup_{x\in B_X} \bigl\|[\widetilde{S}(x)](t_1) -
[\widetilde{T}(x)](t_1)\bigr\| \\ &\leq \sup_{x\in B_X} \bigl\|\widetilde{S}(x) - \widetilde{T}(x)\bigr\|= \|\widetilde{S} - \widetilde{T}\|<\eps.
\end{align*}
As we have already known that
$\|x_1 - x_0\|<\eps$, we obtain that $(X,Y)$ has the BPBp with the function $\eta(\eps)$.
\end{proof}

Let us observe that the converse implication in the above proposition is false, as $Y=\K$ is a universal BPBp range space
by the Bishop-Phelps-Bollob\'{a}s theorem but, $C[0,1]$ does not have even Lindenstrauss property B \cite{Schacher-Pacific}.

A more general way of stating Proposition~\ref{prop-C(K,Y)} is that if $(X,C(K) \hat{\otimes}_\epsilon Y)$
has the BPBp with $\eta(\epsilon),$ then so does $(X,Y)$. We do not know what other spaces, besides $C(K),$ have this property.
In an analogous way, noting
that $L_1(\mu,X) = L_1(\mu) \hat{\otimes}_\pi X,$ we remark that we  do not know if a result similar
to Proposition~\ref{prop-C(K,Y)} can be obtained for vector valued $L_1$-spaces
in the domain: that is, we do not know if the fact that $(L_1(\mu,X),Y)$ has the BPBp implies that $(X,Y)$ does
as well (where $X$ and $Y$ are Banach spaces and $\mu$ is a positive measure).

We finish this section with a discussion on some analogues to Propositions \ref{fam}, \ref{fam2}, and \ref{fam3} for norm-attaining operators.
Let $X$, $Y$, $X_1$, $X_2$, $Y_1$, and $Y_2$ be Banach spaces. It is proved in \cite[Lemma~2]{PaySal} that
Proposition~\ref{fam} has a counterpart for norm attaining operators; that is, if $NA(X_1\oplus_1 X_2,Y_1\oplus_\infty Y_2)$
is dense in $L(X_1\oplus_1 X_2,Y_1\oplus_\infty Y_2)$, then $NA(X_i,Y_j)$ is dense in $NA(X_i,Y_j)$ for $i,j\in\{1,2\}$.
If we consider $\ell_1$-sums in the range space, it is possible to adapt the proof of Proposition~\ref{fam3} to this case, obtaining
that if $NA(X, Y_1\oplus_1 Y_2)$ is dense in $L(X,Y_1\oplus_1 Y_2)$, then $NA(X,Y_j)$ is dense in
$L(X,Y_j)$ for $j=1,2$. As far as we know, this result is new. On the other hand, if we consider $\ell_\infty$-sums of domain spaces,
we do not know how to adapt the proof of Proposition~\ref{fam2} to the case of norm-attaining operators. We do not know if the result
is true or not; that is, \ we do not know whether the fact that $NA(X_1\oplus_\infty
X_2,Y)$ is dense in $L(X_1\oplus_\infty X_2,Y)$ forces $NA(X_i,Y)$
to be dense in $L(X_i,Y)$, $i=1,2$.

Let us summarize this new result here (actually, we state a formally more general result) and deduce from it what we believe is an interesting consequence.

\begin{prop}
Let $\{Y_j\, :\, j \in J\}$ be a family of Banach spaces,
$Y=\big[\bigoplus_{j\in J}Y_j\big]_{\ell_1}$, and let $X$ be a
Banach space. If $NA(X, Y)$ is dense in $L(X,Y)$, then $NA(X,Y_j)$
is dense in $L(X,Y_j)$ for every $j\in J$.
\end{prop}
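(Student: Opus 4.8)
The plan is to transcribe the proof of Proposition~\ref{fam3} almost verbatim, keeping the construction that turns a norm-attaining operator in $L(X,Y)$ into a norm-attaining operator in $L(X,Y_1)$ and simply deleting the part that controlled the distance between the initial points, which plays no role for plain density. First, the usual reductions: to prove that $NA(X,Y_{j_0})$ is dense for a fixed $j_0\in J$, note that $Y\cong Y_{j_0}\oplus_1 Z$ isometrically, where $Z:=\big[\bigoplus_{j\in J\setminus\{j_0\}}Y_j\big]_{\ell_1}$, so it suffices to prove that density of $NA(X,Y_1\oplus_1 Y_2)$ in $L(X,Y_1\oplus_1 Y_2)$ implies density of $NA(X,Y_1)$ in $L(X,Y_1)$; and by homogeneity it is enough to approximate a given $T\in L(X,Y_1)$ with $\|T\|=1$ within an arbitrarily prescribed $\eps\in(0,1/2)$ (larger tolerances being automatic). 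Fix such $T$ and $\eps$; the aim is to produce $S\in NA(X,Y_1)$ with $\|S-T\|<\eps$.

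Next I would lift and approximate. Set $\widetilde T\in L(X,Y)$ by $\widetilde T x=(Tx,0)$, so $\|\widetilde T\|=1$, and use density of $NA(X,Y)$ to pick $\widetilde S\in NA(X,Y)$ with $\|\widetilde T-\widetilde S\|<\eps$; write $c:=\|\widetilde S\|$ (so $|c-1|<\eps$) and choose $x_0\in S_X$ with $\|\widetilde S x_0\|=c$. Decompose $\widetilde S x=(S_1 x,S_2 x)$ with $S_i\in L(X,Y_i)$; since $\|S_1 x-Tx\|+\|S_2 x\|=\|\widetilde S x-\widetilde T x\|<\eps$ for all $x\in B_X$, we get $\|S_1-T\|\leq\eps$ and $\|S_2\|\leq\eps$. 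Pick, via Hahn--Banach, a functional $y^*=(y_1^*,y_2^*)\in Y^*\equiv Y_1^*\oplus_\infty Y_2^*$ with $\|y^*\|=1$ and $\re y^*(\widetilde S x_0)=c$. Then
$$
c=\re y_1^*(S_1 x_0)+\re y_2^*(S_2 x_0)\leq\|S_1 x_0\|+\|S_2 x_0\|=\|\widetilde S x_0\|=c,
$$
so $\re y_1^*(S_1 x_0)=\|S_1 x_0\|$ and $\re y_2^*(S_2 x_0)=\|S_2 x_0\|$. Moreover $\|S_1 x_0\|=c-\|S_2 x_0\|\geq c-\|S_2\|\geq 1-2\eps>0$, and combining $\re y_1^*(S_1 x_0)=\|S_1 x_0\|$ with $\|y_1^*\|\leq 1$ forces $y_1^*\bigl(S_1 x_0/\|S_1 x_0\|\bigr)=1$.

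Then, exactly as in Proposition~\ref{fam3}, define $S\in L(X,Y_1)$ by $Sx=S_1 x+y_2^*(S_2 x)\,S_1 x_0/\|S_1 x_0\|$. For $x\in B_X$ one has $\|Sx\|\leq\|S_1 x\|+\|S_2 x\|=\|\widetilde S x\|\leq c$, hence $\|S\|\leq c$; evaluating $\re y_1^*$ at $Sx_0$ gives $\|Sx_0\|\geq\re y_1^*(S_1 x_0)+\re y_2^*(S_2 x_0)=\|S_1 x_0\|+\|S_2 x_0\|=c$, so $S$ attains its norm (equal to $c$) at $x_0$; and $\|Sx-Tx\|\leq\|S_1 x-Tx\|+\|S_2 x\|=\|\widetilde S x-\widetilde T x\|<\eps$ for $x\in B_X$, so $\|S-T\|<\eps$. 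This settles the two-summand case, and the general statement follows.

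The one point that differs from Proposition~\ref{fam3}, and hence the only place needing care, is that here $\widetilde S$ is merely known to attain its norm, not to have norm one; one therefore carries the constant $c=\|\widetilde S\|$ through all the estimates and uses $|c-1|<\eps<1/2$ to keep $\|S_1 x_0\|$ strictly positive, since it appears in a denominator. Apart from this bookkeeping, the argument is a routine copy of the proof of Proposition~\ref{fam3} with the point-approximation portion removed, so no genuine obstacle is expected.
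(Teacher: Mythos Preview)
Your proof is correct and follows precisely the route the paper indicates: the paper does not spell out a proof but merely says ``it is possible to adapt the proof of Proposition~\ref{fam3} to this case,'' and your argument is exactly that adaptation, with the only adjustment being to carry the constant $c=\|\widetilde S\|$ (since $\widetilde S$ is now merely norm-attaining rather than norm-one) through the estimates. The bookkeeping with $c$ is handled correctly, and the rest is a straightforward transcription with the point-approximation clause removed.
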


\begin{example}\label{example-C01-L101}
{\slshape Consider the Banach space $X=C[0,1]\oplus_1 L_1[0,1]$.
Then, $NA(X,X)$ is not dense in $L(X,X)$.\ } Indeed, if $NA(X,X)$
were dense in $L(X,X)$, then $NA(L_1[0,1],X)$ would be dense in
$L(L_1[0,1],X)$ by \cite[Lemma~2]{PaySal}. But the above proposition
would imply that $NA(L_1[0,1],C[0,1])$ is dense in
$L\bigl(L_1[0,1],C[0,1]\bigr)$, a result which was proved to be
false by W.~Schachermayer \cite{Schacher-Pacific}.
\end{example}

We will provide another example of this in section~\ref{sec:range}.

\section{Results on domain spaces}\label{sec:domain}

We have two objectives in this section. First, we will show that every Banach space of dimension
greater than one can be renormed to not be a universal BPBp domain space. One should compare this result with
the result by J.~Bourgain \cite{Bou} asserting that a Banach space has Lindenstrauss property A
in every equivalent norm if and only if the space has the Radon-Nikod\'{y}m property. Later in this section
we will study conditions which ensure that a Banach space is a universal BPB domain space.

\begin{theorem}\label{th:BPBp-A-renorming}
The base field $\K=\R~\mbox{or}~\C$ is the unique Banach space which is a universal BPBp domain space in any equivalent renorming.
\end{theorem}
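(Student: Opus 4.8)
The plan is to show two things: first, that $\K$ itself is a universal BPB domain space in every equivalent norm (which is trivial, since all norms on a one-dimensional space coincide up to a scalar, and $(\K,Z)$ has the BPBp for every $Z$ by the very definition of universal range spaces applied to $\K=\K$, or more simply because $L(\K,Z)=Z$ and the BPBp reduces to a triviality); and second, that no Banach space $X$ with $\dim X\geq 2$ can be a universal BPB domain space in all equivalent renormings. So the entire content is in the second assertion, and I would isolate it as the main step.

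To prove that a space of dimension $\geq 2$ admits a bad renorming, the natural strategy is to force an $L$-summand into the picture. The abstract and introduction announce the result ``if a Banach space contains a non-trivial $L$-summand, then it is not a universal BPB domain space,'' which I would assume is proved elsewhere in the paper. Granting that, the renorming step is purely geometric: given $X$ with $\dim X\geq 2$, pick any two linearly independent vectors, hence a two-dimensional subspace, write $X=M\oplus N$ as a (not necessarily isometric) direct sum with $\dim M=1$ say, or more efficiently split off a one-dimensional complemented subspace $\K e$ via a norm-one projection... but wait, a one-dimensional subspace need not be the range of a norm-one projection. The cleaner route: since $\dim X\geq 2$, write $X$ as a vector-space direct sum $X = X_1\oplus X_2$ with $\dim X_1=1$ and $X_2$ any algebraic complement, and define a new norm by $\normt{x_1+x_2} := \norm{x_1} + \norm{x_2}$ (using the original norm on each piece, where $X_1$, $X_2$ carry the restricted norms). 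This is an equivalent norm making $(X,\normt{\cdot}) = X_1 \oplus_1 X_2$ isometrically, so it has a non-trivial $L$-summand (namely $X_1$, which is one-dimensional hence nonzero, and $X_2$ is nonzero since $\dim X\geq 2$). Hence by the cited result it fails to be a universal BPB domain space, and $\normt{\cdot}$ is equivalent to $\norm{\cdot}$ because on a finite set of coordinates... no — $X$ may be infinite-dimensional, so I must check equivalence genuinely: $\normt{x}\geq \norm{x_1+x_2}=\norm{x}$ is false in general too. The right statement is that the $\ell_1$-type norm built from a \emph{closed} decomposition is equivalent; so I would instead take $X_2$ to be a \emph{closed} hyperplane (kernel of a norm-one functional $f\in S_{X^*}$, which exists by Hahn–Banach / Bishop–Phelps) and $X_1=\K e$ for some $e$ with $f(e)=1$; then $x\mapsto (f(x)e,\, x-f(x)e)$ is a bounded projection decomposition and the resulting $\oplus_1$ norm is equivalent to the original.

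So the key steps in order are: (1) dispose of the one-dimensional case, noting all its renormings are isometric to $\K$ and $\K$ is a universal BPB range space by Bishop–Phelps–Bollob\'as, hence trivially $(\K,Z)$ has the BPBp for all $Z$; (2) for $\dim X\geq 2$, produce a closed-subspace decomposition $X=X_1\oplus X_2$ with both summands nonzero via a norm-attaining functional; (3) define the equivalent renorming $\normt{x} = \norm{f(x)e} + \norm{x-f(x)e}$, check it is a norm equivalent to $\norm{\cdot}$ (two-sided bound from boundedness of the projection $P=f(\cdot)e$ and of $\Id-P$); (4) observe $(X,\normt{\cdot})$ is isometrically $X_1\oplus_1 X_2$ with $X_1\neq\{0\}\neq X_2$, so it contains a non-trivial $L$-summand; (5) apply the earlier theorem that such a space is not a universal BPB domain space. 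The main obstacle I anticipate is purely bookkeeping — making sure the renorming is genuinely equivalent in the infinite-dimensional case and that ``non-trivial $L$-summand'' is correctly witnessed — since the substantive work (an $L$-summand kills BPB universality of the domain) is quarantined in a separate result I am allowed to cite. If that result is not yet available at this point in the paper, the fallback would be to invoke instead the already-proven Corollary~\ref{cor-finite-ellinftysums}-style obstruction together with the failure of $\ell_1^2$, but the $L$-summand route is the intended one.
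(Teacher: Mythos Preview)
Your proposal is correct and follows essentially the same route as the paper: split off a one-dimensional piece to realize $X$ isometrically (after renorming) as $\K\oplus_1 Z$, then invoke the $L$-summand obstruction. The paper's proof is terser---it just writes ``consider any one-codimensional subspace $Z$ of $X$ and observe that $X\simeq \widetilde{X}:=\K\oplus_1 Z$'' and then applies Lemma~\ref{lemma:L-summand-strictlyconvex} directly with a strictly convex, non-uniformly-convex $Y$---so your extra care in choosing $Z$ as the kernel of a bounded functional (to guarantee closedness and hence equivalence of the $\oplus_1$ norm) is exactly the routine detail the paper suppresses.
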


The theorem is a direct result of the following lemma.

\begin{lemma}\label{lemma:L-summand-strictlyconvex}
Let $X$ be a Banach space containing a non-trivial $L$-summand (i.e.\ $X=X_1\oplus_1 X_2$ for some non-trivial
subspaces $X_1$ and $X_2$) and let $Y$ be a strictly convex Banach space. If the pair $(X,Y)$ has the BPBp, then
$Y$ is uniformly convex.
\end{lemma}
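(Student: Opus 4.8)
The plan is to argue by contradiction. Suppose $Y$ is strictly convex but \emph{not} uniformly convex; then there are $\delta_0>0$ and sequences $(y_n),(z_n)$ in $S_Y$ with $\|y_n-z_n\|\geq\delta_0$ for all $n$ and $\|y_n+z_n\|\to 2$. Write $X=X_1\oplus_1 X_2$ with $X_1,X_2\neq\{0\}$ and fix norm-one functionals $f_1\in S_{X_1^*}$, $f_2\in S_{X_2^*}$. The key construction is the ``decoupled'' operator $T_0\in L(X,Y)$ given by $T_0(x_1,x_2)=f_1(x_1)\,y_n+f_2(x_2)\,z_n$ for a suitably large, fixed $n$: the estimate $\|T_0(x_1,x_2)\|\leq|f_1(x_1)|+|f_2(x_2)|\leq\|x_1\|+\|x_2\|$ gives $\|T_0\|\leq 1$, while $\|T_0(x_1,0)\|=|f_1(x_1)|$ together with $\|f_1\|=1$ gives $\|T_0\|=1$. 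Choosing $v_1\in S_{X_1}$, $v_2\in S_{X_2}$ with $f_1(v_1),f_2(v_2)$ real and close to $1$, and taking the \emph{balanced} point $x_0=(\tfrac12 v_1,\tfrac12 v_2)\in S_X$, one gets $T_0(x_0)\approx\tfrac12(y_n+z_n)$, so that $\|T_0(x_0)\|>1-\eta$ whenever $n$ is large enough and $v_1,v_2$ are accurate enough, where $\eta:=\eta(X,Y)(\eps)>0$ for a small $\eps\in(0,1)$ to be pinned down at the end in terms of $\delta_0$ only.

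Next I would apply the BPBp of $(X,Y)$ to $(T_0,x_0)$: we obtain $S\in L(X,Y)$ with $\|S\|=1$ and $x=(x_1,x_2)\in S_X$ with $\|Sx\|=1$, $\|x-x_0\|<\eps$ and $\|S-T_0\|<\eps$. Since the norm of $X$ is the $\ell_1$-norm, putting $a=\|x_1\|$ and $b=\|x_2\|$ we have $a+b=1$ and, from $\|x-x_0\|<\eps$, both $|a-\tfrac12|<\eps$ and $|b-\tfrac12|<\eps$; in particular $a,b>0$. The chain $1=\|Sx\|\leq\|S(x_1,0)\|+\|S(0,x_2)\|\leq a+b=1$ then forces $\|S(x_1,0)\|=a$, $\|S(0,x_2)\|=b$, and $\|S(x_1,0)+S(0,x_2)\|=\|S(x_1,0)\|+\|S(0,x_2)\|$. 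Here strict convexity of $Y$ is used: two nonzero vectors whose norms add are positive multiples of a common unit vector, so $S(x_1,0)=aw$ and $S(0,x_2)=bw$ for some $w\in S_Y$.

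Finally I would compare $S$ with $T_0$ on each summand. From $\|S-T_0\|<\eps$ we get $\|aw-f_1(x_1)\,y_n\|=\|S(x_1,0)-T_0(x_1,0)\|<\eps a$, and dividing by $a$ and replacing the scalar coefficient by its normalization yields $\|w-\lambda_1 y_n\|<2\eps$ for some $\lambda_1\in\K$ with $|\lambda_1|=1$; symmetrically $\|w-\lambda_2 z_n\|<2\eps$ with $|\lambda_2|=1$. Hence $\|y_n-\tau z_n\|<4\eps$ where $\tau=\overline{\lambda_1}\lambda_2$ has modulus one. If $\K=\R$, then $\tau=\pm1$: the case $\tau=1$ contradicts $\|y_n-z_n\|\geq\delta_0$ once $4\eps<\delta_0$, and $\tau=-1$ gives $\|y_n+z_n\|<4\eps$, impossible for large $n$. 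If $\K=\C$, then $\|y_n+z_n\|\leq\|y_n-\tau z_n\|+|1+\tau|<4\eps+|1+\tau|$, so for large $n$ we have $|1+\tau|>2-5\eps$; combined with $|1-\tau|^2+|1+\tau|^2=4$ this gives $|1-\tau|<\sqrt{20\eps}$, whence $\|y_n-z_n\|\leq\|y_n-\tau z_n\|+|1-\tau|<4\eps+\sqrt{20\eps}$, again contradicting $\|y_n-z_n\|\geq\delta_0$ provided $\eps$ was chosen at the outset with $4\eps+\sqrt{20\eps}<\delta_0$. This contradiction shows $Y$ is uniformly convex.

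The heart of the argument — and the only genuinely delicate point — is the middle step: deducing from the single equality $\|Sx\|=1$ together with the $\ell_1$-splitting $x=(x_1,x_2)$ that $S$ sends both summands $X_1$ and $X_2$ into \emph{one and the same} ray of $Y$. This is precisely where strict convexity of $Y$ is invoked, and it is what makes the decoupled operator $T_0$ — which spreads the two summands into the well-separated directions $y_n$ and $z_n$ — impossible to approximate by norm-attaining operators. Everything else (the construction of $T_0$ and $x_0$, the choice of constants, and the scalar estimates, marginally fussier over $\C$) is routine bookkeeping.
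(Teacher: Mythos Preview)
Your proof is correct and follows essentially the same strategy as the paper's: build the rank-two operator $T_0(x_1,x_2)=f_1(x_1)y_n+f_2(x_2)z_n$, apply the BPBp at the balanced point, and use strict convexity of $Y$ to force the approximating $S$ to send both summands into a single direction $w$, contradicting the separation of $y_n$ and $z_n$. The one simplification the paper makes is to choose the points $e_j\in S_{X_j}$ \emph{first} and then pick $e_j^*$ with $e_j^*(e_j)=1$ via Hahn--Banach, so that $T(e_1,0)=y_1$ and $T(0,e_2)=y_2$ exactly; this eliminates both your approximation of $f_j(v_j)$ and the unimodular phase $\tau$, giving directly $\|y_1-y_2\|<10\eps$ in the real and complex cases alike.
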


\begin{proof}
For $j = 1, 2,$ we pick $e_j\in S_{X_j}$ and $e_j^*\in S_{X^*}$ such that $e_j^*(e_j)=1$ and
$e_1^*(X_2)=0$ and $e_2^*(X_1)=0$ (just identify $X_j^*$ with a subspace of $X^*$ and extend the functional to be
zero on the other subspace).

Fix $\eps\in (0,1/2)$ and recall that $\eta(X,Y)(\eps)>0$ by the BPBp. Consider $y_1,y_2\in S_Y$ such that
$$
\|y_1+y_2\|> 2 - 2 \eta(X,Y)(\eps).
$$
We define an operator $T\in L(X,Y)$ by
$$
T(x_1,x_2)=e_1^*(x_1)y_1 \, + \, e_2^*(x_2)y_2 \qquad \bigl((x_1,x_2)\in X\bigr),
$$
which satisfies $\|T\|=1$. As
$$
\left\|T\bigl(\tfrac12 e_1,\tfrac12 e_2\bigr)\right\| = \tfrac12 \|y_1 + y_2 \|> 1 - \eta(X,Y)(\eps),
$$
there are $(x_1,x_2)\in S_X$ and $S\in L(X,Y)$ such that
$$
\|S\|=\|S(x_1,x_2)\|=1,\quad \left\|\tfrac12e_1-x_1\right\| + \left\|\tfrac12e_2-x_2\right\|<\eps \quad \text{ and } \quad \|T-S\|<\eps.
$$
We deduce that $\left|\tfrac12 - \|x_j\| \right|<\eps<1/2$, so $0 < \|x_j\| < 1$ and
$$
\left\|e_j-\frac{x_j}{\|x_j\|}\right\| \leq 2 \left\|\tfrac12 e_j - x_j\right\| + \left\|2x_j - \frac{x_j}{\|x_j\|}\right\|\leq 2\eps + \bigl|1-2\|x_j\| \bigr| <4\eps
$$
for $j = 1, 2$. If we write
$$
z_1=S\left(\frac{x_1}{\|x_1\|},0\right)\in B_{Y} \quad \text{ and } \quad z_2=S\left(0,\frac{x_2}{\|x_2\|}\right)\in B_{Y},
$$
we have that
$$
1=\|S(x_1,x_2)\|=\bigl\| \|x_1\| z_1 + \|x_2\| z_2\bigr\|\quad  \text{ and } \quad \|x_1\| + \|x_2\|=1.
$$
As $Y$ is strictly convex, it follows that $z_1=z_2$. Now,
\begin{align*}
\|y_1 - y_2\| & = \|T(e_1,0) - T(0,e_2)\| \\ & \leq \bigl\|T(e_1,0) - S(e_1,0)\bigr\| + \bigl\|T(0,e_2)-S(0,e_2)\bigr\| \\ & \qquad\qquad  +
\bigl\|S(e_1,0) - z_1\bigr\| + \bigl\|S(0,e_2)-z_2\bigr\| \\ &< 2\|T-S\| + \left\|S(e_1,0) - S\left(\frac{x_1}{\|x_1\|},0\right)\right\| +
\left\|S(0,e_2) - S\left(0,\frac{x_2}{\|x_2\|}\right)\right\| \\ &\leq 2\eps + \|S\|\left(\left\|e_1-\frac{x_1}{\|x_1\|}\right\| + \left\|e_2-\frac{x_2}{\|x_2\|}\right\|\right)<10\eps.
\end{align*}
This implies that $Y$ is uniformly convex, as desired.
\end{proof}

\begin{proof}[Proof of Theorem~\ref{th:BPBp-A-renorming}]
If $\dim(X)>1$, we consider any one-codimensional subspace $Z$ of $X$ and observe that
$X\simeq \widetilde{X}:=\K \oplus_1 Z$. Now, consider any strictly convex Banach space $Y$
which is not uniformly convex, and the above lemma gives that $(\widetilde{X},Y)$ does not have the BPBp.
\end{proof}

We next give the following particular case of Lemma~\ref{lemma:L-summand-strictlyconvex}, which can be deduced
from arguments given in \cite{AAGM2}.

\begin{corollary}\label{cor:ell12-strictlyconvex}
Let $Y$ be a strictly convex Banach space. If $(\ell_1^2,Y)$ has the BPBp, then $Y$ is uniformly convex.
\end{corollary}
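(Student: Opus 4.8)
The plan is to recognize \texttt{Corollary~\ref{cor:ell12-strictlyconvex}} as the special case of \texttt{Lemma~\ref{lemma:L-summand-strictlyconvex}} obtained by taking $X=\ell_1^2$. Indeed, $\ell_1^2 = \K\oplus_1\K$ is exactly a Banach space that decomposes as an $\ell_1$-sum of two non-trivial subspaces $X_1=X_2=\K$, so $\ell_1^2$ contains a non-trivial $L$-summand. Thus the hypothesis of the lemma is satisfied, and so, for any strictly convex $Y$ for which $(\ell_1^2,Y)$ has the BPBp, the lemma yields immediately that $Y$ is uniformly convex.

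So the proof is essentially one line: \emph{Apply Lemma~\ref{lemma:L-summand-strictlyconvex} with $X=\ell_1^2=\K\oplus_1\K$.} There is no real obstacle here; the only thing worth noting is that one should check the subspaces $X_1=X_2=\K$ are ``non-trivial'' in the sense intended in the statement of the lemma (i.e.\ both are nonzero), which is obvious.

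If one wanted a self-contained argument instead, one would simply transcribe the proof of the lemma in this concrete setting: pick $e_1=(1,0)$, $e_2=(0,1)$ in $S_{\ell_1^2}$ and the coordinate functionals $e_1^*=(1,0)$, $e_2^*=(0,1)$ in $S_{(\ell_1^2)^*}=S_{\ell_\infty^2}$; given $y_1,y_2\in S_Y$ with $\|y_1+y_2\|$ close to $2$, form $T(x_1,x_2)=x_1 y_1 + x_2 y_2$ of norm one, apply the BPBp to the point $(\tfrac12,\tfrac12)$, and then run the strict-convexity argument to conclude $\|y_1-y_2\|$ is small. But since the lemma has already been proved in full generality, invoking it directly is cleaner. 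The statement about deducibility from \cite{AAGM2} can be left as the parenthetical remark it already is in the text.

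\begin{proof}
The space $\ell_1^2 = \K\oplus_1\K$ contains the non-trivial $L$-summand decomposition $X_1=X_2=\K$. Hence Lemma~\ref{lemma:L-summand-strictlyconvex}, applied with $X=\ell_1^2$, shows that if $(\ell_1^2,Y)$ has the BPBp and $Y$ is strictly convex, then $Y$ is uniformly convex.
\end{proof}
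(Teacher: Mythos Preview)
Your proposal is correct and matches the paper's approach exactly: the paper presents this corollary as a particular case of Lemma~\ref{lemma:L-summand-strictlyconvex} and gives no separate proof, so invoking the lemma with $X=\ell_1^2=\K\oplus_1\K$ is precisely what is intended.
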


A nice consequence of the above corollary is the following example.

\begin{example}
{\slshape There exists a reflexive Banach space $X$ such that the pair $(X,X)$ fails the BPBp.\ }
Indeed, let $Y$ be a reflexive strictly convex space which is not uniformly convex and consider the reflexive
space $X=\ell_1^2\oplus_1 Y$. If the pair $(X,X)$ had the BPBp, then so would $(\ell_1^2,Y)$ by
Theorem~\ref{th:main-sums-together}, a contradiction with Corollary~\ref{cor:ell12-strictlyconvex}.
\end{example}

Our next goal is to give some necessary conditions for a Banach space to be a
universal BPB domain space. Let us recall that if a Banach space $X$ has Lindenstrauss property A,
then the following hold  \cite{Lindens}: (1) if $X$ is isomorphic to a strictly convex space, then $S_X$ is
the closed convex hull of its extreme points, and (2) if $X$ is isomorphic to a locally uniformly convex
space, then $S_X$ is the closed convex hull of its strongly exposed points. These results have been
strengthened to the case of universal BPBp domain spaces in \cite[Theorem~8 and Corollary~9]{KimLee} but with
the additional hypothesis that there is a common function $\eta$ giving the BPBp for all range spaces.
Thanks to Corollary~\ref{corollary:universal-eta}, this hypothesis is unnecessary.

\begin{corollary}\label{tool123} Let $X$ be a universal BPB domain space. Then,
\begin{enumerate}
\item[(a)] in the real case, there is no face of $S_X$ which contains a non-empty relatively open subset of $S_X$;
\item[(b)] if $X$ is isomorphic to a strictly convex Banach space, then the set of all extreme points of $B_X$ is dense in $S_X$;
\item[(c)] if $X$ is superreflexive, then the set of all strongly exposed points of $B_X$ is dense in $S_X$.
\end{enumerate}
In particular, if $X$ is a real $2$-dimensional Banach space which is a universal BPB domain space, then $X$ is uniformly convex.
\end{corollary}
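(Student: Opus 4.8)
The plan is to deduce Corollary~\ref{tool123} from Corollary~\ref{corollary:universal-eta} together with the results of \cite{KimLee}, and then to extract the two-dimensional consequence from part~(a). By Corollary~\ref{corollary:universal-eta}(a), if $X$ is a universal BPB domain space, then there is a single function $\eta_X:(0,1)\longrightarrow\R^+$ such that $(X,Y)$ has the BPBp with $\eta_X$ for \emph{every} Banach space $Y$. This is precisely the extra hypothesis under which \cite[Theorem~8 and Corollary~9]{KimLee} are stated, so statements (a), (b) and (c) follow immediately by invoking those results; there is essentially nothing to prove beyond citing them once the universal function is in hand.

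For the final assertion, I would argue as follows. Let $X$ be a real $2$-dimensional universal BPB domain space and suppose, for contradiction, that $X$ is not uniformly convex. In dimension two, failure of uniform convexity (equivalently, of strict convexity, since in finite dimensions these coincide) means that $S_X$ contains a nondegenerate line segment. Such a segment is a face of $S_X$ of dimension one; since $\dim X=2$, a one-dimensional relatively open piece of that segment is also relatively open in $S_X$. Thus there is a face of $S_X$ containing a nonempty relatively open subset of $S_X$, contradicting part~(a). Hence $X$ must be strictly convex, and a strictly convex finite-dimensional space is uniformly convex, so $X$ is uniformly convex.

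The only point requiring a little care is the identification ``$X$ not uniformly convex'' $\Longrightarrow$ ``$S_X$ contains a segment'' and the observation that in a two-dimensional space a relatively open subset of such a segment is relatively open in the whole sphere; this is elementary planar geometry but is what makes part~(a) bite exactly in dimension two. I expect the main (indeed only) conceptual obstacle to be purely bookkeeping: making sure that the hypotheses of \cite[Theorem~8 and Corollary~9]{KimLee} match what Corollary~\ref{corollary:universal-eta} delivers, namely a function $\eta_X$ that is simultaneously valid for all range spaces rather than one depending on $Y$. Once that is checked, the corollary is immediate.

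\begin{proof}
Statements (a), (b) and (c) follow at once from \cite[Theorem~8 and Corollary~9]{KimLee} together with Corollary~\ref{corollary:universal-eta}: the cited results require the existence of a function $\eta$ giving the BPBp for the pair $(X,Y)$ simultaneously for all Banach spaces $Y$, and Corollary~\ref{corollary:universal-eta}(a) guarantees exactly such a universal function $\eta_X$ whenever $X$ is a universal BPB domain space.

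For the last assertion, let $X$ be a real $2$-dimensional universal BPB domain space and assume, seeking a contradiction, that $X$ is not uniformly convex. Since $\dim X<\infty$, this means $X$ is not strictly convex, so $S_X$ contains a nondegenerate segment $[u,v]$ with $u\neq v$. The relative interior of $[u,v]$ is contained in a face $F$ of $S_X$, and, because $\dim X=2$, this relative interior is a nonempty relatively open subset of $S_X$ contained in $F$. This contradicts part~(a). Therefore $X$ is strictly convex, and a strictly convex finite-dimensional normed space is uniformly convex; hence $X$ is uniformly convex.
\end{proof}
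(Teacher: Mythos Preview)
Your proposal is correct and matches the paper's approach for parts (a), (b), (c): the paper simply observes that these follow from \cite[Theorem~8 and Corollary~9]{KimLee} once Corollary~\ref{corollary:universal-eta} supplies the universal function $\eta_X$, and that is exactly what you do.

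For the final two-dimensional assertion, the paper treats it as part of the package imported from \cite[Corollary~9]{KimLee}, whereas you give a short self-contained derivation from part~(a) via the segment-on-the-sphere argument. Your argument is correct (in a real $2$-dimensional space the sphere is a topological circle, so the relative interior of a nondegenerate segment on $S_X$ is indeed relatively open in $S_X$), and it has the small advantage of making the corollary internally self-contained rather than relying on the reader to locate the two-dimensional statement inside \cite{KimLee}. Substantively, though, the two routes coincide.
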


We don't know if a universal BPB domain space has to be uniformly convex. Nevertheless, we can improve Corollary~\ref{tool123}.(c)
to get a slightly stronger result. To do this, we follow Lindenstrauss \cite{Lindens} to say that a family $\left\{x_\alpha\right\}_\alpha
\subset S_X$ is \emph{uniformly strongly exposed} (with respect to a family $\left\{f_\alpha\right\}_\alpha\subset S_{X^*}$) if there
is a function $\eps \in (0,1) \longmapsto \delta(\eps)>0$ having the following properties:\\
\indent (i) $f_\alpha(x_\alpha) = 1$ for every $\alpha$, and\\
\indent (ii) for any $x\in B_X$, $\re f_\alpha(x)> 1-\delta(\eps)$ implies $\|x-x_\alpha\|<\eps$.\\
In a uniformly convex Banach space,
the unit sphere is a uniformly strongly exposed family (actually, this property characterizes uniform convexity). Here is our new result.

\begin{corollary}
Let $X$ be a superreflexive universal BPB domain space. Then for every $\eps_0\in (0,1)$ there exists
an $\eps_0$-dense uniformly strongly exposed family. In particular, the set of all strongly exposed points of $B_X$ is dense in $S_X$.
\end{corollary}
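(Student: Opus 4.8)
The plan is to establish the quantitative statement first: for every $\eps_0 \in (0,1)$ there is a uniformly strongly exposed family $\{x_\alpha\}_\alpha \subset S_X$ that is $\eps_0$-dense in $S_X$, with an exposing function $\delta(\cdot)$ depending only on $\eps_0$ and on the universal function $\eta_X$ supplied by Corollary~\ref{corollary:universal-eta}(a). The final ``in particular'' then follows immediately, since each $x_\alpha$ is in particular a strongly exposed point of $B_X$, and letting $\eps_0 \to 0$ shows the strongly exposed points are dense in $S_X$.

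To produce the family, I would mimic Lindenstrauss's argument for superreflexive spaces but track the BPBp constants. The idea is: since $X$ is superreflexive, fix an equivalent uniformly convex norm $\normt{\cdot}$ on $X$; its unit sphere $S_{(X,\normt{\cdot})}$ is a uniformly strongly exposed family for $(X,\normt{\cdot})$. For a fixed $\eps_0$, I would consider the operator-theoretic setup that feeds the BPBp for $X$: take $Y = (X,\normt{\cdot})$ (or a suitable one-dimensional-by-one-dimensional gluing as in the proof of Corollary~\ref{tool123}), let $T_0$ be the formal identity $(X,\norm{\cdot}) \to (X,\normt{\cdot})$, rescaled to norm one, and note that $T_0$ nearly attains its norm at many points — in fact at every point where the two norms nearly agree up to the comparison constants. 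Applying the BPBp for $(X,Y)$ with the universal function $\eta_X$ at scale $\eps \approx \eps_0$ produces, for each such point $x_0$, a norm-attaining operator $S$ close to $T_0$ and a point $x$ close to $x_0$ with $\normt{Sx} = \norm{S} = 1 = \norm{x}$. The functional $f_x := S^*(g)$, where $g \in S_{Y^*}$ is a norming functional for $Sx$, will strongly expose $x$ in $B_X$, and the modulus of strong exposedness is controlled uniformly because it comes from the uniform convexity modulus of $\normt{\cdot}$ composed with the fixed perturbation size $\eps_0$ and the fixed norm-comparison constants. Since $x_0$ ranges over an $\eps_0$-net of $S_X$ and $\norm{x - x_0} < \eps_0$, the resulting family $\{x\}$ is $(2\eps_0)$-dense, which after renaming $\eps_0$ gives $\eps_0$-density.

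The key steps in order: (1) invoke Corollary~\ref{corollary:universal-eta}(a) to fix the universal function $\eta_X$; (2) fix $\eps_0$, pick an equivalent uniformly convex norm $\normt{\cdot}$ with $\norm{x} \le \normt{x} \le \lambda \norm{x}$ and choose an auxiliary parameter $\eps = \eps(\eps_0,\lambda)$ small; (3) for each point $v$ in an $\eps$-net of $S_X$, build the near-norm-attaining operator and vector and apply the BPBp for $(X, (X,\normt{\cdot}))$ to obtain $S_v$, $x_v$, and a norming functional, then show $\re S_v^*(g_v)(x) > 1 - \delta(\eps')$ forces $\normt{x - x_v}$ small via uniform convexity of $\normt{\cdot}$, hence $\norm{x-x_v}$ small; (4) assemble $\{x_v\}$, verify the single function $\delta$ works for all of them, and verify $\eps_0$-density; (5) deduce the corollary statement. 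The main obstacle I anticipate is step~(3): ensuring that the strong-exposedness modulus $\delta$ is genuinely uniform over the whole family and does not degrade as $v$ ranges over the net — this requires carefully decoupling the uniform convexity modulus of $\normt{\cdot}$ (which is intrinsic and uniform) from the BPBp perturbation, and checking that the functional $S_v^*(g_v)$, pulled back from $Y^*$, exposes $x_v$ in $B_{(X,\norm{\cdot})}$ rather than merely in $B_{(X,\normt{\cdot})}$, with losses controlled only by $\lambda$ and $\eps_0$. A secondary technical point is handling the case $\dim X = 1$ (trivial) and making sure the operator $T_0$ used actually has BPBp available for it, i.e.\ that $(X,Y)$ with $Y=(X,\normt{\cdot})$ is covered by the universal-domain hypothesis, which it is since $Y$ is an arbitrary Banach space.
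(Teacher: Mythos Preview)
Your outline is close in spirit to the paper's argument, but it has a genuine gap at the point you yourself flag as delicate. With a \emph{single} fixed equivalent uniformly convex norm $\normt{\cdot}$ (comparison constant $\lambda$), the normalized formal identity $T_0:(X,\|\cdot\|)\to (X,\normt{\cdot})$ satisfies only $\normt{T_0 v}\geq 1/\lambda$ at a generic $v\in S_X$; so $T_0$ nearly attains its norm at $v$ only when $1/\lambda>1-\eta_X(\eps_0)$. You seem to accept this limitation (``at every point where the two norms nearly agree''), and then try to patch it with an $\eps$-net --- but the set of points where the two norms nearly agree need not be dense in $S_X$ at all, so the net argument does not start. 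The paper's key device, which your sketch is missing, is to make the uniformly convex renorming \emph{depend on $\eps_0$}: one takes a fixed uniformly convex $|||\cdot|||\leq\|\cdot\|$ and sets $\|x\|_m^2=\|x\|^2+\tfrac1m|||x|||^2$. Then $\|x\|\leq\|x\|_m\leq a_m\|x\|$ with $a_m\to 1$, so for $m$ large enough (chosen from $\eta_X(\eps_0)$) the normalized identity $T_m$ satisfies $\|T_m x_0\|_m>1-\eta_X(\eps_0)$ for \emph{every} $x_0\in S_X$. No net is needed; the BPBp applies at every point.

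There is a second point your outline underplays. To pull strong exposedness back from $(X,\|\cdot\|_m)$ to $(X,\|\cdot\|)$ you need not just ``losses controlled by $\lambda$ and $\eps_0$'' but the invertibility of the perturbed operator $S$ with a \emph{uniform} bound on $\|S^{-1}\|$. The paper secures this by arranging $\|T_m-S\|<\eps_0<a_m^{-1}\leq\|T_m^{-1}\|^{-1}$, so a Neumann-series estimate gives $\|S^{-1}\|\leq a_m/(1-a_m\eps_0)$, a bound depending only on $\eps_0$. Then $\re S^*y^*(x)>1-\delta_m(\eps')$ forces $\|Sx-Sx_1\|_m<\eps'$, hence $\|x-x_1\|\leq\|S^{-1}\|\eps'$, which is exactly the uniform strong-exposedness estimate. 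Without this invertibility step, knowing that $S^*g$ exposes $x_v$ in $B_{(X,\normt{\cdot})}$ does not translate into exposedness in $B_{(X,\|\cdot\|)}$.
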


\begin{proof} We write $\|\cdot\|$ for the given norm of $X$ and consider an equivalent norm $|||\cdot |||$ on $X$ for which the Banach space $\left(X, |||\cdot |||\right)$
is uniformly convex. We may assume that $|||x|||\leq \|x\|$ for all $x\in X$ (see \cite[Theorem~9.14]{FHHMZ},
for instance). For each $m\in \mathbb{N}$, we define an equivalent norm on $X$ by
$$
\|x\|_m := \left(\|x\|^2 + \tfrac1m|||x|||^2 \right)^{1/2}\qquad \bigl(x\in X\bigr).
$$
We observe that
$X_m=\left(X, \|\cdot \|_m\right)$ is uniformly convex, so we may consider the function
$\eps\longmapsto \delta_m(\eps)$ (from the uniform convexity of $X_m$) which gives that $S_{X_m}$ is a uniformly
strongly exposed family (with respect to $S_{X_m^*}$). That is, for every $y\in S_{X_m}$ there is $y^*\in S_{X_m^*}$ such that $y^*(y)=1$ and
\begin{equation}\label{eq:uniformlyconvex-m}
{\rm if\ } z\in B_{X_m}\ {\rm is \ such \ that \ } \ \re y^*(z)>1-\delta_m(\eps), {\rm \ then \ }  \|z-y\|_m<\eps.
\end{equation}
Let $I_m: (X,\|\cdot\|)\longrightarrow (X_m,\|\cdot\|_m)$ be the formal identity operator and write
$$
T_m = \frac{I_m}{\|I_m\|} \quad \text{ and } \quad a_m=\frac{\sqrt{m+1}}{\sqrt{m}}
$$
for every $m\in \N$. Since
\begin{equation*}
1\leq \|I_m\| \leq a_m \quad \text{ and } \quad a_m^{-1} \leq \|I_m^{-1}\|\leq 1,
\end{equation*}
it follows that
$$
a_m^{-1}\leq \|T_m^{-1}\|\leq a_m.
$$
Therefore, if $S\in L(X,X_m)$ satisfies $\|T_m-S\|<a_m^{-1}\leq \|T_m^{-1}\|^{-1}$, then $S$ is invertible (see \cite[Corollary~18.12]{Jameson}) and
$$
\|T_m^{-1}-S^{-1}\|\leq \frac{\|T_m^{-1}\|^2\,\|T_m-S\|}{1-\|T_m^{-1}\|\,\|T_m-S\|} \leq \frac{a_m^2\|T_m-S\|}{1-a_m\|T_m-S\|},
$$
(see \cite[Lemma~15.11]{FHHMZ} where it is done for the case $X_m=X$, but the general case easily follows). Then
\begin{equation}\label{eq:unifmlyconvex-S-1}
\|S^{-1}\|\leq \|T_m^{-1}\|+\|T_m^{-1}-S^{-1}\|\leq a_m + \frac{a_m^2\|T_m-S\|}{1-a_m\|T_m-S\|}=\frac{a_m}{1-a_m\|T_m-S\|}.
\end{equation}
Finally, let $\eps\longmapsto \eta_X(\eps)>0$ be the universal BPBp function for $X$ given by Corollary~\ref{corollary:universal-eta}.

We need to show that for every fixed $\eps_0\in (0,1),$  there exists a function $\eps\longmapsto \delta_{\eps_0}(\eps)>0$ satisfying
the following conditions: For each $x_0\in S_X$ there exist $x_1\in S_X$ and $x_1^*\in S_{X^*}$ such that\\
\indent (i) $\|x_1-x_0\|<\eps_0$,\\
\indent (ii)  $x_1^*(x_1)=1$, and\\
\indent (iii) for every $\eps>0$, if $x\in B_X$ satisfies $\re x_1^*(x)>1-\delta_{\eps_0}(\eps)$, then $\|x_1-x\|<\eps$.

Indeed, fix $\eps_0\in (0,1)$ and choose $m\in \N$ satisfying
$$
a_m^{-1}=\frac{\sqrt m}{\sqrt{1+m}}>1-\eta_X(\eps_0) \quad \text{ and } \quad a_m^{-1}>\eps_0.
$$
Consider $\delta_{\eps_0}(\eps)= \delta_m\left(\dfrac{1-a_m\eps_0}{a_m} \,\eps\right)$ ($\eps>0$) for
this $m$. Observe that the operator $T_m\in L(X,X_m)$ satisfies $\|T_m\|=1$ and
\[
\|T_m x_0 \|_m = \frac{\|x_0\|_m}{\|I_m\|} \geq \|x_0\|\,a_m^{-1}>1-\eta_X(\eps_0).
\]
Hence, by the BPB property of $(X,X_m)$, there exist both an operator $S\in L(X,X_m)$ with $\|S\|=1$ and $x_1\in S_X$ such that $$
\|x_0 - x_1 \| < \eps_0, \quad \|S(x_1)\|_m=1, \quad \text{and} \quad \|T_m-S\|<\eps_0.
$$
Now, for $y=S x_1\in S_{X_m}$, let $y^*\in S_{X_m^*}$ be the functional satisfying \eqref{eq:uniformlyconvex-m}
(i.e.\ $y^*$ strongly exposed $S x_1$ with the function $\delta_m(\cdot)$). So $y^*\left(Sx_1\right) = 1$ and for $x\in B_X$ and $\eps'>0$,
$$
\re y^*(Sx)>1-\delta_{m}(\eps') \quad \text{implies} \quad \|S x - Sx_1\|< \eps'.
$$
Consider $x_1\in S_X$ (which satisfies $\|x_1-x_0\|<\eps_0$) and $x_1^*=S^*(y^*)\in X^*$ (which satisfies
$x_1^*(x_1)=1$ and $\|x_1^*\|=1$). Suppose that for some $\eps>0$, $x\in B_X$ satisfies $\re x_1^*(x)>1-\delta_{\eps_0}(\eps)$. Then
$$
\re y^*(Sx)=\re x_1^*(x)>1-\delta_{\eps_0}(\eps)=1-\delta_m\left(\frac{1-a_m\eps_0}{a_m} \,\eps\right),
$$
so $\|Sx - Sx_1\|< \dfrac{1-a_m\eps_0}{a_m}\,\eps$. On the other hand, as $\|T_m-S\|<\eps_0<a_m^{-1}\leq \|T_m^{-1}\|^{-1}$,
it follows that $S$ is invertible and we get from \eqref{eq:unifmlyconvex-S-1} that $\|S^{-1}\|\leq \tfrac{a_m}{1-a_m\|T_m-S\|}.$ Therefore
\begin{equation*}
\|x - x_1\|\leq \|S^{-1}\|\,\|Sx - Sx_1\| < \|S^{-1}\|\frac{1-a_m\eps_0}{a_m}\,\eps \leq
\frac{a_m}{1-a_m\|T_m-S\|}\,\frac{1-a_m\eps_0}{a_m}\,\eps < \eps.\qedhere
\end{equation*}
\end{proof}

One can ask whether it is actually possible to deduce uniform convexity from the above corollary. This is not the
case, even in the finite dimensional case. To see this, just consider a three dimensional space $X$ in which the set $M$ of strongly
exposed points is dense but not the all of $S_X.$ (For instance, we may modify the Euclidean sphere in such a
way that there are two diametrically opposite small line segments and the rest of the points are still strongly exposed.)
Now, for every $\eps_0>0$, consider the set of those points in $S_X$ whose distance to $S_X\setminus M$ is greater
than or equal to $\eps_0/2$ (which is contained in $M$). Then this set is a closed subset of $S_X$ consisting of strongly
exposed points, and so it is uniformly strongly exposed by compactness. On the other hand, it is clearly $\eps_0$-dense.

\section{Results on range spaces}\label{sec:range}

Our main goal here is to give an example of a Banach space having Lindenstrauss property $B$ which is not a universal
BPB range space. We recall that, as a particular case of \cite[Theorem~2.2]{AAGM2}, finite-dimensional real polyhedral spaces are universal BPB range spaces since they have property $\beta$ (we will not introduce the definition of property $\beta$ here since we are not going to work with it).

We are now able to present the main example of the section.

\begin{example}\label{examp}
{\slshape For $k\in \N$, consider $Y_k=\R^2$ endowed with the norm
$$
\|(x,y)\|=\max\{|x|,|y|+\tfrac{1}{k} |x|\} \quad (x,y\in \R).
$$
Observe that $B_{Y_k}$ is the absolutely convex hull of the set
$\bigl\{(0,1),~(1,1-\tfrac{1}{k}),~(-1,1-\tfrac{1}{k})\bigr\}$, so $Y_k$ is polyhedral and, therefore, it is a universal BPB range space by \cite[Theorem~2.2]{AAGM2}.
Then, we have that
$$
\inf\limits_{k\in\N} \eta(\ell_1^2,Y_k)(\eps)=0
$$
for every $\eps\in (0,1/2)$. Therefore, if we consider
$$
\mathcal{Y}=\big[\bigoplus_{i=1}^\infty Y_k\big]_{c_0}, \qquad \mathcal{Z}=\big[\bigoplus_{i=1}^\infty Y_k\big]_{\ell_1} \quad \text{ and } \quad \mathcal{W}=\big[\bigoplus_{i=1}^\infty Y_k\big]_{\ell_\infty},
$$
then none of the pairs $(\ell_1^2,\mathcal{Y})$,  $(\ell_1^2,\mathcal{Z})$ and $(\ell_1^2,\mathcal{W})$ has the BPBp.}
\end{example}

\begin{proof}
Define $T_k\in L(\ell_1^2,Y_k)$ by
$$
T_k(e_1)=(-1,1-\tfrac{1}{k}) \quad \text{ and } \quad T_k(e_2)=(1,1-\tfrac{1}{k}).
$$
Clearly $\|T_k\|=1$ and $T_k(\tfrac12 e_1+\tfrac12 e_2)=(0,1-\tfrac{1}{k})$. Hence, $\|T_k(\tfrac12 e_1+\tfrac12 e_2)\|=1-\tfrac{1}{k}$.
Assume that for some $1/2>\eps>0$ we have
$$
\inf_{k\in\N}\eta(\ell_1^2,Y_k)(\eps)>0
$$
and take $\eta(\eps)$ such that $\inf_{k\in\N}\eta(\ell_1^2,Y_k)(\eps)>\eta(\eps)>0$.
Then, for every $k\in \N$ such that $1-\tfrac{1}{k}>1-\eta(\eps)$, we can find
$S_k\in L(\ell_1^2,Y_k)$ with $\|S_k\|=1$ and $u_k\in S_{\ell_1^2}$ such that
$$\|S_ku_k\|=1,\quad \|T_k-S_k\|<\eps \quad \text{and} \quad \left\|u_k-\left(\tfrac12 e_1+ \tfrac12 e_2\right)\right\|<\eps.
$$
Now, as $\left\|u_k- \left(\tfrac12 e_1+ \tfrac12 e_2\right)\right\|<1/2$, we have that $u_k$ lies in the interior
of the interval $[e_1,e_2]\subset S_{\ell_1^2}$ and that $\|S_k(u_k)\|=1.$  It follows that the entire interval
$[S_k(e_1),S_k(e_2)]$ lies in the unit sphere of $Y_k$, and so $\|S_k(e_1)-S_k(e_2)\|\leq 1$. Since $\|T_k-S_k\|<\eps$, we get that
$$\|T_k(e_1)-S_k(e_2)\|\leq \|T_k(e_1)-S_k(e_1)\|+\|S_k(e_1)-S_k(e_2)\|<\eps+1<3/2.$$

On the other hand, since $\|T_k(e_1)-T_k(e_2)\|=2$,
$$\|T_k(e_1)-S_k(e_2)\|\geq \|T_k(e_1)-T_k(e_2)\|-\|T_k(e_2)-S_k(e_2)\|>2-\eps>3/2,$$ a contradiction.

Finally, the last assertion is a direct consequence of Propositions \ref{fam} (for the $c_0$- and $\ell_\infty$-sums)
and \ref{fam3} (for the $\ell_1$-sum).
\end{proof}

Here is the main consequence of the example above.

\begin{theorem}
Lindenstrauss property B does not imply being a universal BPB range space.
\end{theorem}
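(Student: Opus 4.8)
The plan is to deduce the theorem immediately from Example~\ref{examp} together with the basic facts recorded earlier in the paper. The space $\mathcal{Y}=\big[\bigoplus_{k=1}^\infty Y_k\big]_{c_0}$ is a $c_0$-sum of finite-dimensional polyhedral spaces. Since Lindenstrauss property B is stable under arbitrary $c_0$-sums (this is \cite[Proposition~3]{AAP-quasibeta}, mentioned in the Introduction) and each $Y_k$, being polyhedral and finite-dimensional, has property $\beta$ and hence Lindenstrauss property B, we conclude that $\mathcal{Y}$ has Lindenstrauss property B. On the other hand, Example~\ref{examp} shows that the pair $(\ell_1^2,\mathcal{Y})$ fails the BPBp, so $\mathcal{Y}$ is not a universal BPB range space. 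This exhibits a single space with property B that is not a universal BPB range space, which is exactly the assertion.

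Concretely, the write-up would read: by Example~\ref{examp}, the pair $(\ell_1^2,\mathcal{Y})$ does not have the BPBp, where $\mathcal{Y}=\big[\bigoplus_{k=1}^\infty Y_k\big]_{c_0}$; hence $\mathcal{Y}$ is not a universal BPB range space. However, each $Y_k$ is finite-dimensional and polyhedral, so it has Lindenstrauss property B, and since property B is stable under $c_0$-sums, $\mathcal{Y}$ has Lindenstrauss property B. I would also note, as the paper does in the Introduction and in the outline, that one can even take the $Y_k$ to witness property quasi-$\beta$, giving the formally stronger statement that property quasi-$\beta$ does not imply being a universal BPB range space; and that, combining with Corollary~\ref{cor-finite-ellinftysums} versus the $\ell_\infty$-sum $\mathcal{W}$, being a universal BPB range space is not stable under infinite $c_0$- or $\ell_\infty$-sums.

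There is essentially no obstacle here: all the real work is in Example~\ref{examp}, whose proof is already given, and in the cited stability of property B under $c_0$-sums. The only point requiring a word of care is to make sure the space produced is genuinely a space with property B — i.e.\ that one invokes the $c_0$-sum stability of property B rather than accidentally trying to use the (false) analogous statement for the BPBp. So the proof is just the two-line deduction above, and I would present it as such.

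\begin{proof}
By Example~\ref{examp}, the pair $(\ell_1^2,\mathcal{Y})$ does not have the BPBp, where $\mathcal{Y}=\big[\bigoplus_{k=1}^\infty Y_k\big]_{c_0}$. Therefore $\mathcal{Y}$ is not a universal BPB range space. On the other hand, each $Y_k$ is a finite-dimensional polyhedral space, so it has Lindenstrauss property B (indeed it has property $\beta$). Since Lindenstrauss property B is stable under arbitrary $c_0$-sums \cite[Proposition~3]{AAP-quasibeta}, the space $\mathcal{Y}$ has Lindenstrauss property B. This proves the theorem.
\end{proof}
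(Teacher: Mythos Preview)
Your proof is correct and follows exactly the same approach as the paper: use Example~\ref{examp} to see that $(\ell_1^2,\mathcal{Y})$ fails the BPBp, and invoke the stability of Lindenstrauss property B under $c_0$-sums \cite[Proposition~3]{AAP-quasibeta} to conclude that $\mathcal{Y}$ nonetheless has property B. The additional remarks you make about quasi-$\beta$ and instability under infinite $c_0$- and $\ell_\infty$-sums are precisely the content of the subsequent corollaries in the paper.
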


\begin{proof}
Just consider $\mathcal{Y}$ in the above example. Then $\mathcal{Y}$ has Lindenstrauss property B as a sum of
$Y_k$'s, each of which has it, and this property is stable under $c_0$-sums (see \cite[Proposition~3]{AAP-quasibeta}). However, $\mathcal Y$ is not a universal BPB range space since $(\ell_1^2,\mathcal{Y})$ does not have BPBp.
\end{proof}

In \cite{AAP-quasibeta}, a property called quasi-$\beta$ was introduced as a weakening of
property $\beta$ which still implies property B. The above argument  shows that property quasi-$\beta$ does not implies being a universal BPB range space:

\begin{corollary}
Property quasi-$\beta$ does not imply being a universal BPB range space.
\end{corollary}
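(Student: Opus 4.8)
The plan is to reuse, without change, the space $\mathcal{Y}=\big[\bigoplus_{k\in\N}Y_k\big]_{c_0}$ of Example~\ref{examp}, where $Y_k=\R^2$ carries the polyhedral norm described there. Example~\ref{examp} already tells us that the pair $(\ell_1^2,\mathcal{Y})$ fails the BPBp, so $\mathcal{Y}$ is not a universal BPB range space. Since property quasi-$\beta$ implies Lindenstrauss property B, the corollary follows as soon as we verify that $\mathcal{Y}$ itself has property quasi-$\beta$. So the whole task is to produce the quasi-$\beta$ data for this $c_0$-sum --- exactly as property B was obtained for $\mathcal{Y}$ above, except that now the data must be exhibited explicitly (or one may quote the $c_0$-sum stability of property quasi-$\beta$ from \cite{AAP-quasibeta} if it is stated there).

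Recall that property quasi-$\beta$ requires a family $\{(x_\alpha,x_\alpha^*):\alpha\in\Lambda\}\subseteq S_X\times S_{X^*}$ and a function $\rho\colon\Lambda\to[0,1)$ such that $x_\alpha^*(x_\alpha)=1$, $|x_\beta^*(x_\alpha)|\le\rho(\alpha)$ whenever $\beta\neq\alpha$, and $\{\lambda x_\alpha^*:\alpha\in\Lambda,\ |\lambda|=1\}$ is norming for $X$ (equivalently, its weak-star closure contains $\ex(B_{X^*})$); the difference with property $\beta$ is precisely that $\rho$ may depend on $\alpha$ and need not be bounded away from $1$. For $\mathcal{Y}$ I would use $\mathcal{Y}^*=\big[\bigoplus_k Y_k^*\big]_{\ell_1}$, denote by $\iota_k\colon Y_k^*\hookrightarrow\mathcal{Y}^*$ the canonical isometric injection onto the $k$-th coordinate, choose in each centrally symmetric polygon $B_{Y_k^*}$ one vertex from each antipodal pair, say $v^{(k)}_1,\dots,v^{(k)}_{m_k}$, and let $x^{(k)}_i\in S_{Y_k}$ be the midpoint of the edge of $B_{Y_k}$ that is dual to $v^{(k)}_i$. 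Then I take $\Lambda=\{(k,i)\}$ with $x_{(k,i)}=\iota_k(x^{(k)}_i)$, $x^*_{(k,i)}=\iota_k(v^{(k)}_i)$, and $\rho(k,i)=\max_{j\neq i}\bigl|v^{(k)}_j(x^{(k)}_i)\bigr|$, which is strictly less than $1$ for every fixed $k$ (although $\rho(k,i)\to1$ for the polygons of Example~\ref{examp}, which is exactly why $\mathcal{Y}$ cannot be a property-$\beta$ space).

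The verification is then routine. One has $x^*_{(k,i)}(x_{(k,i)})=v^{(k)}_i(x^{(k)}_i)=1$; for $(k',j)\neq(k,i)$, the quantity $x^*_{(k',j)}(x_{(k,i)})$ equals $0$ if $k'\neq k$ and equals $v^{(k)}_j(x^{(k)}_i)$, of modulus at most $\rho(k,i)$, if $k'=k$ --- using here that the midpoint of an edge of a polygon lies on no other facet, and that only one vertex per antipodal pair was selected so that $x^*_{(k',j)}=-x^*_{(k,i)}$ never occurs inside $\Lambda$. Finally $\ex(B_{\mathcal{Y}^*})=\bigcup_k\iota_k\bigl(\ex(B_{Y_k^*})\bigr)$ and, in the real setting, $\{\pm x^*_{(k,i)}\}=\bigcup_k\iota_k\bigl(\ex(B_{Y_k^*})\bigr)$ as well, so the circled family is norming. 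Hence $\mathcal{Y}$ has property quasi-$\beta$, and Example~\ref{examp} completes the proof.

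The one point I would be careful about is the choice of the functionals $x^*_\alpha$: a single representative from each pair of opposite vertices of $B_{Y_k^*}$ must be taken in every coordinate, since otherwise the inequality $|x^*_\beta(x_\alpha)|\le\rho(\alpha)<1$ would be violated by the index $\beta$ with $x^*_\beta=-x^*_\alpha$; yet this costs nothing for the norming condition, because passing to the circled family $\{\pm x^*_\alpha\}$ restores the omitted vertices. Everything else is a direct computation with the explicit hexagons of Example~\ref{examp} (or just with their abstract polyhedrality). If instead one cites the $c_0$-stability of property quasi-$\beta$, the argument collapses to: each $Y_k$ is finite-dimensional polyhedral, hence has property $\beta$ and a fortiori quasi-$\beta$, so $\mathcal{Y}$ has quasi-$\beta$, and Example~\ref{examp} does the rest.
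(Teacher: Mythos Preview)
Your proposal is correct and follows exactly the paper's approach: use the space $\mathcal{Y}$ of Example~\ref{examp}, which fails to be a universal BPB range space, and show it has property quasi-$\beta$. The paper simply cites \cite[Proposition~4]{AAP-quasibeta} (the $c_0$-sum of spaces with property $\beta$ has property quasi-$\beta$) rather than writing out the quasi-$\beta$ data explicitly, which is precisely the short alternative you describe in your final paragraph.
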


\begin{proof}
As in the theorem above, consider the space $\mathcal{Y}$ of Example~\ref{examp}. Then, $\mathcal{Y}$
has property quasi-$\beta$ since it is a $c_0$-sum of spaces with property $\beta$ and we may use \cite[Proposition~4]{AAP-quasibeta}.
\end{proof}

We now list more consequences of Example~\ref{examp}, showing first that there is no
infinite counterpart to Corollary~\ref{cor-finite-ellinftysums}.

\begin{corollary}
The BPBp is not stable under infinite $c_0$- or $\ell_\infty$-sums of the range space.
Even more, being a universal BPB range space is not stable under infinite $c_0$- or $\ell_\infty$-sums.
\end{corollary}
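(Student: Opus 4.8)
The plan is straightforward: I want to show that the BPBp and the property of being a universal BPB range space are both destroyed by passing to infinite $c_0$- or $\ell_\infty$-sums, and this follows almost immediately from Example~\ref{examp} together with the results already established in Section~\ref{sec:tools}. First I would recall that in Example~\ref{examp} we produced a family $\{Y_k : k\in\N\}$ of two-dimensional real polyhedral spaces, each of which is a universal BPB range space (by \cite[Theorem~2.2]{AAGM2}), hence in particular each pair $(\ell_1^2, Y_k)$ has the BPBp. Thus the individual summands have all the properties in question.

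Next I would invoke the example's conclusion that none of the pairs $(\ell_1^2,\mathcal{Y})$ and $(\ell_1^2,\mathcal{W})$ has the BPBp, where $\mathcal{Y}=\big[\bigoplus_{k=1}^\infty Y_k\big]_{c_0}$ and $\mathcal{W}=\big[\bigoplus_{k=1}^\infty Y_k\big]_{\ell_\infty}$. This shows at once that a $c_0$-sum (respectively an $\ell_\infty$-sum) of infinitely many pairs $(\ell_1^2, Y_k)$ each having the BPBp need not have the BPBp, so the BPBp is not stable under infinite $c_0$- or $\ell_\infty$-sums of the range space. (Alternatively, one could phrase this purely in terms of Proposition~\ref{prop-c_0sumUniversalFunction}: the equivalence (i)$\Leftrightarrow$(iii)$\Leftrightarrow$(iv) there, combined with the fact from Example~\ref{examp} that $\inf_k \eta(\ell_1^2,Y_k)(\eps)=0$ for $\eps\in(0,1/2)$, already yields the failure.)

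For the ``even more'' assertion, I would observe that each $Y_k$ is a universal BPB range space, yet $\mathcal{Y}=\big[\bigoplus_k Y_k\big]_{c_0}$ is not, since the single pair $(\ell_1^2,\mathcal{Y})$ fails the BPBp; the same applies to $\mathcal{W}$. Hence being a universal BPB range space is not preserved under infinite $c_0$- or $\ell_\infty$-sums. I do not anticipate a real obstacle here—the corollary is essentially a bookkeeping consequence of Example~\ref{examp}—the only thing to be careful about is making explicit that the finitely-many-summands statement (Corollary~\ref{cor-finite-ellinftysums}) genuinely has no infinite analogue, which is exactly what the contrast between $(\ell_1^2,Y_k)$ having the BPBp and $(\ell_1^2,\mathcal{Y})$ failing it demonstrates.

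\begin{proof}
Consider the spaces $\{Y_k : k\in\N\}$ and $\mathcal{Y}$, $\mathcal{W}$ from Example~\ref{examp}. Each $Y_k$ is a finite-dimensional polyhedral space, hence a universal BPB range space by \cite[Theorem~2.2]{AAGM2}; in particular each pair $(\ell_1^2,Y_k)$ has the BPBp. However, by Example~\ref{examp} neither $(\ell_1^2,\mathcal{Y})$ nor $(\ell_1^2,\mathcal{W})$ has the BPBp, where $\mathcal{Y}=\big[\bigoplus_{k=1}^\infty Y_k\big]_{c_0}$ and $\mathcal{W}=\big[\bigoplus_{k=1}^\infty Y_k\big]_{\ell_\infty}$. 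This shows that the BPBp is not stable under infinite $c_0$- or $\ell_\infty$-sums of the range space. Moreover, since each $Y_k$ is a universal BPB range space while $\mathcal{Y}$ (respectively $\mathcal{W}$) is not—indeed the pair $(\ell_1^2,\mathcal{Y})$ (respectively $(\ell_1^2,\mathcal{W})$) fails the BPBp—being a universal BPB range space is not stable under infinite $c_0$- or $\ell_\infty$-sums either.
\end{proof}
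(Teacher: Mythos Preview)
Your proof is correct and follows exactly the intended route: the paper gives no explicit proof of this corollary, treating it as an immediate consequence of Example~\ref{examp}, and your argument spells out precisely that consequence.
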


It is shown in \cite{Kim-c_0} that if $Y$ is a real strictly convex Banach space which is not uniformly convex,
then $(c_0,Y)$ fails to have the BPBp. We are now able to show the same for some non-strictly convex spaces.

\begin{corollary} Let $\mathcal{Y}$, $\mathcal{Z}$ and $\mathcal{W}$ be the spaces of Example~\ref{examp}.
Then none of $\mathcal{Y}$, $\mathcal{Z}$ and $\mathcal{W}$ is strictly convex and, in the real case, none of
the pairs $(c_0,\mathcal{Y})$, $(c_0,\mathcal{Z})$, $(c_0,\mathcal{W})$ has the BPBp.
\end{corollary}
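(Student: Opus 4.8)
The plan is to deduce this corollary from Example~\ref{examp} together with the direct-sum tools of Section~\ref{sec:tools}. First I would observe that none of $\mathcal{Y}$, $\mathcal{Z}$, $\mathcal{W}$ is strictly convex: each $Y_k$ is a two-dimensional polyhedral space whose unit sphere contains the nontrivial segment joining $(1,1-\tfrac1k)$ and $(-1,1-\tfrac1k)$ (equivalently, the face $\{(x,1+\tfrac1k x - \tfrac2k\cdot\tfrac{1+x}{2})\}$—more simply, the top edge $\{y=1,\ x=0\}$ is a vertex but the two slanted edges are genuine segments), so $Y_k$ itself is not strictly convex, and a $c_0$-, $\ell_1$-, or $\ell_\infty$-sum of a family containing a non-strictly-convex space is plainly not strictly convex. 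Here it is cleanest to point to the segment $\bigl[(1,1-\tfrac1k),(-1,1-\tfrac1k)\bigr]\subset S_{Y_k}$ already used in the proof of Example~\ref{examp}.

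Next I would establish the failure of the BPBp for $(c_0,\mathcal{Y})$, $(c_0,\mathcal{Z})$, $(c_0,\mathcal{W})$. The key point is that $\ell_1^2$ sits isometrically as an $\ell_1$-summand of $c_0$: indeed, identifying the first two coordinates, one has $c_0 = \ell_1^2 \oplus_\infty c_0$ is false, but rather $c_0$ contains $\ell_\infty^2$-summands, not $\ell_1^2$-summands. So instead I would use that $\ell_1^2$ is (isometrically) a one-complemented subspace that arises as a domain factor only through $\ell_\infty$-sums. The correct route: $c_0 = \big[\bigoplus_{n} \K\big]_{c_0}$, and by Proposition~\ref{fam2}, if $(c_0,\mathcal{Y})$ had the BPBp with some $\eta$, then $(\K,\mathcal{Y})$ would too—but that gives nothing since $\mathcal{Y}$ has property $\beta$-ish behavior. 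The real mechanism must be that $\ell_1^2$ is a quotient, or that $c_0$ admits $\ell_1^2$ as a subspace in a way compatible with the BPBp; the honest statement used in \cite{Kim-c_0} is that failure of BPBp for $(\ell_1^2,Y)$ with non-uniformly-convex strictly convex $Y$ propagates to $(c_0,Y)$. Since our $\mathcal{Y},\mathcal{Z},\mathcal{W}$ are not strictly convex, I cannot invoke \cite{Kim-c_0} directly; the argument must instead re-run the computation of Example~\ref{examp} with $c_0$ in place of $\ell_1^2$, using that the relevant two-dimensional section of $B_{c_0}$ spanned by $e_1-e_2$ and $e_1+e_2$ reproduces the $\ell_1^2$ geometry. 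Concretely: take $T_k\in L(c_0,Y_k)$ supported on the first two coordinates exactly as in Example~\ref{examp}, extend by zero, and observe $\|T_k(\tfrac12 e_1 + \tfrac12 e_2)\| = 1-\tfrac1k$; then the same segment-in-the-sphere obstruction forces $\inf_k \eta(c_0,Y_k)(\eps)=0$ for $\eps\in(0,1/2)$, whence by Propositions~\ref{fam} and \ref{fam3} none of $(c_0,\mathcal{Y})$, $(c_0,\mathcal{Z})$, $(c_0,\mathcal{W})$ has the BPBp.

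The main obstacle I anticipate is precisely this transfer from $\ell_1^2$ to $c_0$: Propositions~\ref{fam}, \ref{fam2}, \ref{fam3} are all \emph{one-directional} (passing from a sum to a summand), and $\ell_1^2$ is neither an $\ell_\infty$-summand nor a $c_0$-summand of $c_0$, so none of them applies off the shelf in the needed direction. The resolution is to redo the small, self-contained argument of Example~\ref{examp} internally: the only features of $\ell_1^2$ used there are (i) that $\tfrac12 e_1 + \tfrac12 e_2$ lies in the relative interior of a segment $[e_1,e_2]\subset S_{\ell_1^2}$ with $\|e_1-e_2\|=2$, and (ii) that norm-attainment at an interior point of a segment forces the image segment to lie in the sphere. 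Both survive verbatim in $c_0$ with $e_1,e_2$ the first two unit vectors, since $\tfrac12 e_1 + \tfrac12 e_2$ has norm $\tfrac12$... which is wrong—so one rescales, or better, uses $e_1$ and $-e_2$: in $c_0$ the segment $[e_1,-e_2]$ does \emph{not} lie in $S_{c_0}$. Thus the cleanest fix is to note that $\ell_1^2$ embeds isometrically into $c_0$ as $\mathrm{span}\{f_1,f_2\}$ where $f_1=e_1+e_2$, $f_2=e_1-e_2$ realize the $\ell_\infty^2$ norm—giving $\ell_\infty^2 = \ell_1^2$ isometrically in dimension two—and that this two-dimensional subspace is the range of a norm-one projection, so operators from it extend; then Proposition~\ref{fam2}-type reasoning (write $c_0 = \ell_\infty^2 \oplus_\infty c_0$) combined with the isometry $\ell_\infty^2 \cong \ell_1^2$ and Example~\ref{examp} finishes it. Either way, once the $\ell_1^2$ obstruction is placed inside $c_0$, the $c_0$-, $\ell_1$-, $\ell_\infty$-sum stability propositions deliver the three pairs at once, exactly as in the final line of the proof of Example~\ref{examp}.
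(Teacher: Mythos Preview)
Your final paragraph lands on precisely the paper's argument: in the real case one has the isometry $\ell_\infty^2 \cong \ell_1^2$, hence $c_0 \cong \ell_\infty^2 \oplus_\infty c_0 \cong \ell_1^2 \oplus_\infty c_0$, and Proposition~\ref{fam2} then says that the BPBp for $(c_0,\mathcal{Y})$ (resp.\ $\mathcal{Z}$, $\mathcal{W}$) would imply it for $(\ell_1^2,\mathcal{Y})$ (resp.\ $\mathcal{Z}$, $\mathcal{W}$), contradicting Example~\ref{examp}. The non-strict-convexity observation is also fine.

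However, the proposal as written is not a proof but a sequence of false starts, and it contains a direct self-contradiction: you first assert that ``$c_0 = \ell_1^2 \oplus_\infty c_0$ is false'' and only later rediscover exactly this identity (via the real isometry $\ell_\infty^2 \cong \ell_1^2$) as the key step. All the intermediate attempts---re-running Example~\ref{examp} with the $c_0$ unit vectors, noticing that $\tfrac12 e_1+\tfrac12 e_2$ has norm $\tfrac12$ there, searching for an $\ell_1$-summand of $c_0$---are dead ends that should simply be deleted. The whole argument is two lines: $c_0$ is isometric to $\ell_1^2\oplus_\infty c_0$ in the real case, so Proposition~\ref{fam2} reduces the BPBp for $(c_0,\cdot)$ to that for $(\ell_1^2,\cdot)$, which fails by Example~\ref{examp}. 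No further appeal to Propositions~\ref{fam} or~\ref{fam3} is needed at this stage, since Example~\ref{examp} already covers all three range spaces $\mathcal{Y}$, $\mathcal{Z}$, $\mathcal{W}$.
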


\begin{proof}
Notice that $c_0$ and $\ell_1^2\oplus_\infty c_0$ are isometric, so we consider $\ell_1^2\oplus_\infty c_0$.
If $(\ell_1^2\oplus_\infty c_0, \mathcal{Y})$ has the BPBp, then from Proposition~\ref{fam2}, $(\ell_1^2,\mathcal{Y})$
has the BPBp, contradicting Example~\ref{examp}. The same argument works for $(c_0,\mathcal{Z})$.
\end{proof}

Finally, we have the following negative results on the stability of the so-called AHSP. Recall that a Banach space $Y$ has the \emph{Approximate Hyperplane Series Property} (\emph{AHSP}) \cite{AAGM2} if it satisfies a geometrical condition which is equivalent to the fact that $(\ell_1,Y)$ has the BPBp.

\begin{corollary}
The AHSP is not stable under infinite $c_0$-, $\ell_1$- or $\ell_\infty$-sums.
\end{corollary}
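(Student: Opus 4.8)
The plan is to deduce the statement directly from Example~\ref{examp} once we reinterpret the BPBp for $(\ell_1,\,\cdot\,)$ as the AHSP. Recall that, by definition, a Banach space $V$ has the AHSP precisely when the pair $(\ell_1,V)$ has the BPBp. First I would observe that each summand $Y_k$ of Example~\ref{examp} has the AHSP: being a finite-dimensional polyhedral space, $Y_k$ is a universal BPB range space (by \cite[Theorem~2.2]{AAGM2}, as recorded in Example~\ref{examp}), so in particular $(\ell_1,Y_k)$ has the BPBp. Hence it remains only to check that the $c_0$-sum $\mathcal{Y}$, the $\ell_1$-sum $\mathcal{Z}$ and the $\ell_\infty$-sum $\mathcal{W}$ of the family $\{Y_k\}$ all fail the AHSP, i.e.\ that none of $(\ell_1,\mathcal{Y})$, $(\ell_1,\mathcal{Z})$, $(\ell_1,\mathcal{W})$ has the BPBp.

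The key point is that $\ell_1$ is, isometrically, the $\ell_1$-sum $\big[\bigoplus_{n\in\N}\ell_1^2\big]_{\ell_1}$. Writing $\mathcal{V}$ for any of $\mathcal{Y}$, $\mathcal{Z}$, $\mathcal{W}$ and regarding $\mathcal{V}$ trivially as a one-term $c_0$-sum of itself, Proposition~\ref{fam} applies and gives
$$
\eta(\ell_1,\mathcal{V})(\eps)\leq \eta(\ell_1^2,\mathcal{V})(\eps)\qquad \bigl(\eps\in(0,1)\bigr).
$$
By Example~\ref{examp}, $\eta(\ell_1^2,\mathcal{V})(\eps)=0$ for every $\eps\in(0,1/2)$, so $\eta(\ell_1,\mathcal{V})(\eps)=0$ for those $\eps$ as well; thus $(\ell_1,\mathcal{V})$ fails the BPBp and $\mathcal{V}$ fails the AHSP. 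Combined with the first paragraph, this proves that the AHSP is not stable under infinite $c_0$-, $\ell_1$- or $\ell_\infty$-sums.

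I do not anticipate any genuine difficulty: the corollary is just a repackaging of Example~\ref{examp} together with the fact, contained in Proposition~\ref{fam}, that the BPBp passes to $\ell_1$-summands of the domain space. The only mild technical point is that Proposition~\ref{fam} is phrased for $c_0$- or $\ell_\infty$-sums in the range, whereas $\mathcal{Z}$ is an $\ell_1$-sum; this causes no trouble, since one invokes the proposition with the range space $\mathcal{V}$ taken as a one-term $c_0$-sum (equivalently $\ell_\infty$-sum) of itself, which is always legitimate.
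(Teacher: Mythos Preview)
Your proof is correct and follows essentially the same route as the paper: both argue that each $Y_k$ has the AHSP (the paper cites finite-dimensionality, you cite the universal BPB range property) and then use the fact that $\ell_1^2$ is an $\ell_1$-summand of $\ell_1$ together with Proposition~\ref{fam} to transfer the failure of the BPBp from $(\ell_1^2,\mathcal{V})$ to $(\ell_1,\mathcal{V})$. The only cosmetic difference is that the paper writes $\ell_1\equiv \ell_1^2\oplus_1\ell_1$ whereas you use $\ell_1\equiv\big[\bigoplus_{n\in\N}\ell_1^2\big]_{\ell_1}$, which is immaterial for the argument.
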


\begin{proof} Let $\mathcal{Y}=\big[\bigoplus_{i=1}^\infty Y_k\big]_{c_0}$ be the space given in Example~\ref{examp}.
As $\ell_1\equiv \ell_1^2 \oplus_1 \ell_1$, it follows from Proposition~\ref{prop-c_0sumUniversalFunction}
that $(\ell_1,\mathcal{Y})$ does not have the BPBp, from which we deduce that $\mathcal{Y}$ does not have the
AHSP \cite{AAGM2}. On the other hand, all the $Y_k$ have the AHSP since they are finite-dimensional
\cite{AAGM2}. For the $\ell_1$-sum and the $\ell_\infty$-sum, the argument is the same considering $\mathcal{Z}=\big[\bigoplus_{i=1}^\infty Y_k\big]_{\ell_1}$ and $\mathcal{W}=\big[\bigoplus_{i=1}^\infty Y_k\big]_{\ell_\infty}$.
\end{proof}

\end{document}